\documentclass[12pt]{amsart}

\hoffset=-0.5in
\voffset=-0.2in
\textwidth=6in

\usepackage{amsmath,amssymb,amsthm}
\usepackage{amsfonts}
\usepackage[mathscr]{eucal}
\pagestyle{plain}

\usepackage[dvips]{graphics,color}



\usepackage{amsmath,amssymb,amsthm}
\usepackage{amsfonts}
\usepackage[mathscr]{eucal}
\pagestyle{plain}

\newcommand{\rd}{{\mathbb R^d}}

\newcommand{\rr}{{\mathbb R}}

\def\E{{\mathbb E}}
\newtheorem{thm}{Theorem}
\numberwithin{thm}{section}
\newtheorem{lemma}[thm]{Lemma}
\newtheorem{remark}{Remark}

\newtheorem{theorem}{Theorem}[section]

\numberwithin{equation}{section}
\numberwithin{remark}{section}
\numberwithin{proposition}{section}
\newtheorem{corollary}{Corollary}[section]


\newcommand{\RR}[1]{\mathbb{#1}}

\def\N{{\mathbb N}}

\def\E{{\mathbb E}}

\begin{document}

\thispagestyle{empty}

\title{\bf Stochastic solutions of a class of Higher order Cauchy problems in $\rd$}
\author{Erkan Nane}
\address{Erkan Nane, Department of Mathematics and Statistics, 221 Parker Hall,
Auburn University, Auburn, AL 36849}
\email{nane@auburn.edu}

\begin{abstract}
 We study solutions of a class of  higher order partial differential equations  in bounded domains. These partial differential equations appeared first time in the papers of  Allouba and Zheng \cite{allouba1},  Baeumer, Meerschaert and Nane \cite{bmn-07}, Meerschaert, Nane and Vellaisamy \cite{MNV}, and Nane \cite{nane-h}. We express the solutions by subordinating a killed Markov process by a hitting time of a stable subordinator of index $0<\beta <1$, or by  the absolute value of a symmetric $\alpha$-stable process with $0<\alpha\leq 2$, independent of the Markov process. In some special cases we represent the solutions by running composition of $k$ independent Brownian  motions, called $k$-iterated Brownian motion for an integer $k\geq 2$. We make use of a connection between fractional-time diffusions and higher order partial differential equations established first by Allouba and Zheng \cite{allouba1} and later extended in several directions by Baeumer, Meerschaert and Nane \cite{bmn-07}.

\end{abstract}


\keywords{  Iterated Brownian motion of Burdzy, $k$-iterated Brownian motion,
Brownian-time Brownian motion of Allouba and Zheng, exit time, bounded domain, heat equation, Caputo fractional derivative, fractional diffusion, higher of cauchy problems.}

\maketitle

\section{Introduction and statement of main results}

In recent years, there have been two lines of study of the stochastic solutions of partial differential equations (PDE's): higher order Cauchy problems \cite{allouba2,allouba3, allouba1,bmn-07,MNV,nane-h} and time fractional Cauchy problems \cite{fracCauchy, gammagt1,Zsolution,MNV}. We will use the equivalence of these two types of Cauchy problems on $\rd$ and on bounded domains with Dirichlet boundary conditions to get classical as well as stochastic  solutions of a class of higher order Cauchy problems that appeared in \cite{bmn-07,MNV,nane-h}.

In this paper we suppose that Brownian motion  has variance $2t$ (or  that    time clock is  twice the speed of a standard Brownian motion). We express the solutions of these Cauchy problems  using $k$-iterated Brownian motions for  an integer $k\geq 2$. There are two ways to define $k$-iterated Brownian motions. The first one is just to let
\begin{equation}\label{iterated-bm1}
I_{k}(t)=B_1(|B_2(|B_3(|\cdots (|B_{k}(t)|)\cdots|)|)|)
\end{equation}
where $B_j$'s are independent real-valued Brownian motions all started at $0$. In $\rd$, one takes $B_1$ to be an $\rd$-valued Brownian motion with independent components. In this case we denote $k$-iterated Brownian motion by $I^d_{k}(t)$.

To define the second version of $k$-iterated Brownian motions,
let $X^{+}(t)$, $X^{-}(t)$
 be  independent
one-dimensional Brownian motions, all started at $0$.
 Two-sided Brownian motion is defined to be
\[ X(t)=\left\{ \begin{array}{ll}
X^{+}(t), &t\geq 0\\
X^{-}(-t), &t<0.
\end{array}
\right. \]
Then the second version of $k$-iterated Brownian motion is defined as

\begin{equation}\label{iterated-bm2}
 J_{k}(t)=X_1(X_2(X_3(\cdots (X_{k}(t))\cdots)))
\end{equation}
where $X_j$'s are independent real-valued two-sided Brownian motions all started at $0$. In $\rd$, one takes $X_1$ to be an $\rd$-valued two-sided Brownian motion with independent components. In this case we denote $k$-iterated Brownian motion by $J^d_{k}(t)$

For $k=2$, both of these processes $I_2$  and $J_2$ were called iterated Brownian motion (IBM) and they have been studied by several researchers; see, for example \cite{allouba2,allouba1,burdzy1,burdzy2, koslew,nane,nane2,nane3, nourdin} and references therein.

Recently, Aurzada and Lifshits \cite{aurzada} studied the small ball probability for $I_k, J_k$. Arcones \cite{arcones} studied the large deviation for $k$-iterated Brownian motions.

The classical well-known connection of a PDE and a stochastic process
is the Brownian motion and heat equation connection. Let $X({t})\in
\rd$ be Brownian motion started at $x$. Then the function
$$
u(t,x)=\E_{x}[f(X({t}))]
$$
is the unique solution of  the  Cauchy problem under mild conditions on $f$
\begin{equation*}\begin{split}
\frac{\partial}{\partial t}u(t,x)\ =  \Delta u(t,x) ;\ &\quad
u(0,x) = f(x)
\end{split}\end{equation*}
for $t>0$ and $x\in \rd$.
There is a similar connection for any Markov process where we replace $\Delta$  with the generator of the Markov process; see, for example, \cite{ABHN,bass, davies, sato}.

 Allouba and Zheng \cite{allouba1} and DeBlassie  \cite{deblassie} obtained the PDE connection of 2-iterated Brownian motion.
They showed that for $Z(t)=I_2(t),\ \mathrm{or}\ J_2(t)$
 $$
 u(t,x)=\E_{x}[f(Z({t}))]
 $$
 solves the Cauchy problem
\begin{equation}\begin{split}\label{ibm-pde0}
\frac{\partial}{\partial t}u(t,x)\ =
\frac{{\Delta}f(x)}{\sqrt{\pi t}}\ + \
{\Delta}^{2}u(t,x);& \quad
u(0,x)= f(x)
\end{split}\end{equation}
for $t>0$ and $x\in \rd$.
The non-Markovian property of IBM is reflected by the appearance
of the initial function $f(x)$ in the PDE.
The methods of Allouba and Zheng are more general and they  allow one to replace $\Delta$ with the generator of a Markov process.
Another important characteristic of the Allouba-Zheng work is  its
setup. 
Let $X^x$ be a continuous Markov process started at $x$ and let $B(t)$ be a Brownian motion independent of $X$. They call $X^x(|B(t)|)$ a Brownian time process(BTP). Further more: 
\begin{enumerate}
\item  They introduced-for the first time-a large class of stochastic processes including kEBTPs  that is obtained by taking  at random one of the $k$ copies of independent Markov process $X^x$ on each excursion interval of $|B(t)|$. By this method some  important  classes of processes are obtained  including iterated Brownian motion of Burdzy \cite{burdzy1} in the case $k=2$,  BTPs in the case of $k=1$, Markov snake of Le Gall \cite{legal1,legal1,legal3}, and  a process that is intermediate between iterated Brownian motion of Burdzy and Markov snake of Le Gall in the case of taking limit $k\to\infty$.
\item  Their formulation and approach handle and link to fourth order PDEs not
just IBMs, but a much larger class (in (1)) containing many other interesting
new processes.
\end{enumerate}


In the absence of the initial function in the PDE \eqref{ibm-pde0} this problem was studied in  \cite{funaki, hoc-ors,ben-roy-val}.

Nigmatullin \cite{nigmatullin} gave a Physical derivation of fractional
diffusion
\begin{equation}\label{frac-derivative-0}
\frac{\partial^\beta}{\partial t^\beta}u(t,x) = {L_x}u(t,x); \quad
u(0,x) = f(x)
\end{equation}
for $t>0$ and $x\in \rd$, where $0< \beta <1$ and $L_x$ is the
generator of some continuous Markov process $X_0(t)$ started at
$x=0$. Here $\partial^{\beta} g(t)/\partial t^\beta $ is the Caputo
fractional derivative in time, which can be defined as the inverse
Laplace transform of $s^{\beta}\tilde{g}(s)-s^{\beta -1}g(0)$, with
$\tilde{g}(s)=\int_{0}^{\infty}e^{-st}g(t)dt$ the usual Laplace
transform. Zaslavsky \cite{zaslavsky} used \eqref{frac-derivative-0} to model Hamiltonian chaos.

Baeumer and Meerschaert \cite{fracCauchy} and Meerschaert and Scheffler \cite{limitCTRW}
show that, in the case $p(t,x)=T(t)f(x)$ is a bounded continuous
semigroup on a Banach space (with corresponding process $X(t)$, $E^\beta(t)=\inf \{u:\ D(u)>t\}$, $D(t)$ is a stable subordinator with index $0<\beta<1$), the formula
\begin{eqnarray}
u(t,x)& =&\E_x(f(X({E^\beta(t)})))=\frac{t}{\beta}\int_{0}^{\infty}p(s,x)g_{\beta}(\frac{t}{s^{1/\beta}})s^{-1/\beta
-1}ds \nonumber 
\end{eqnarray}
yields the unique solution to the  fractional Cauchy problem \eqref{frac-derivative-0}.
  Here $g_\beta(t)$ is the smooth
density of the stable subordinator $D(1)$, such that the Laplace
transform $\tilde g_\beta(s)=\int_0^\infty
e^{-st}g_\beta(t)\,dt=e^{-s^\beta}$.

Allouba and Zheng \cite{allouba1} were also
the first to establish a connection between their class of BTPs and the time half-derivative through their BTP half-derivative generator (see their Theorem 0.5), in
addition to connecting their BTPs to 4th order PDEs. Essentially, Allouba and Zheng \cite{allouba1} show that BTPs are  also stochastic solution of   \eqref{frac-derivative-0} in the case $\beta=1/2$ and $L_x$ is a second order elliptic differential operator of divergence form.

Later,
 Orsingher and Beghin \cite{OB,OB1} show that
$u(t,x)=\E_x(f(I_{k+1}(t)))$ is the solution of
\begin{equation}\label{frac-derivative-2n}
\frac{\partial^{1/2^k}}{\partial t^{1/2^k}}u(t,x) =
\frac{\partial^2}{\partial x^2}u(t,x); \quad u(0,x) = f(x)
\end{equation}
for $t>0$ and $x\in \rr$.

We will denote the
Laplace, Fourier, and Fourier-Laplace transforms (respectively) by:
\begin{equation*}\begin{split}
\tilde{u}(s,x)&=\int_{0}^{\infty}e^{-st}u(t, x)dt ; \\
\hat{u}(t,k)&=\int_{\RR{R}^{d}}e^{-ik\cdot x}u(t, x)dx ; \\
\bar{u}(s,k)&=\int_{\RR{R}^{d}}e^{-ik\cdot x}\int_{0}^{\infty}e^{-st}
u(t, x)dtdx .
\end{split}\end{equation*}

Using Fourier-Laplace transform method,
Baeumer, Meerschaert, and Nane \cite{bmn-07} showed the equivalence of a class of Higher order Cauchy problems and time  fractional Cauchy problems:   Suppose that $X(t)=x+X_0(t)$ where $X_0(t)$ is a L\'evy process in $\rd$
starting at zero.  If $L_x$ is the generator
of the semigroup $p(t,x)=\E_x[(f(X(t)))]$, then for any $f\in D(L_{x})$, the domain of $L_x$, and for any $m=2,3,4,\ldots$, both the Cauchy problem
\begin{equation}\begin{split}\label{one-third-m}
\frac{\partial u(t,x)}{\partial t} & =\sum_{j=1}^{m-1} \frac{t^{j/m-1}}{\Gamma(j/m)} L_x^j f(x)
+ L_x^m u(t,x);\quad   u(0,x) =  f(x)
\end{split}\end{equation}
and the fractional Cauchy problem
\begin{equation}\label{frac-derivative-m}
\frac{\partial^{1/m}}{\partial t^{1/m}}u(t,x) =
{L_x}u(t,x); \quad u(0,x) = f(x),
\end{equation}
have the same unique solution given by
$$
u(t,x) =\E_x (f(X(E^{1/m}(t))))=\int_0^\infty p((t/s)^{1/m},x) g_{1/m}(s)\,ds.
$$
Considering \eqref{frac-derivative-2n}, and the equivalence of \eqref{one-third-m} and \eqref{frac-derivative-m}, it is natural to expect the following Theorem. It establishes the PDE connection of $k-$iterated Brownian motion which extends PDE connection of $2$-iterated Brownian motion (IBM) due to Allouba and Zheng \cite{allouba1} and DeBlassie \cite{deblassie}.

\begin{theorem}\label{pde-conn-n}
Suppose that $X(t)=x+X_0(t) \in \rd$  where $X_0(t)$ is a L\'evy process
starting at zero.  If $L_x$ is the generator
 of the
semigroup $T(t)f(x)=\E_x[(f(X(t)))]$, then for any
 $f\in D(L_{x})$, domain of $L_x$, $u(t,x)=\E_x(f(X(|I_{k}(t)|)))$ is the
 unique solution of the Cauchy problems \eqref{one-third-m} and \eqref{frac-derivative-m} with $m=2^k$. If $Z(t)$ is a two-sided L\'evy process with independent copies of $X$ for positive and negative times then $u(t,x)=\E_x(f(Z(J_{k}(t))))$ is also the
 unique solution of the Cauchy problems \eqref{one-third-m} and \eqref{frac-derivative-m} with $m=2^k$.
\end{theorem}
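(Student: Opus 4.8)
The plan is to reduce everything to the equivalence theorem of Baeumer, Meerschaert and Nane quoted above, which already asserts that $v(t,x)=\E_x(f(X(E^{1/m}(t))))$ is the unique solution of both \eqref{one-third-m} and \eqref{frac-derivative-m} with exponent $1/m$. Taking $m=2^k$, it therefore suffices to show that the candidate $u(t,x)=\E_x(f(X(|I_k(t)|)))$ coincides with $v(t,x)$. Since the driving L\'evy process $X$ is independent of the iterated Brownian motion $I_k$, conditioning on the time change gives
\begin{equation*}
\E_x(f(X(\tau)))=\E\big[p(\tau,x)\big]=\int_0^\infty p(s,x)\,\mu_t(ds),
\end{equation*}
where $p(s,x)=T(s)f(x)$ and $\mu_t$ is the law of the nonnegative random variable $\tau$. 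Hence $u=v$ as soon as $|I_k(t)|\stackrel{d}{=}E^{1/2^k}(t)$ for every fixed $t>0$, and uniqueness is then inherited from the equivalence theorem. The whole problem is thus reduced to identifying the one-dimensional laws of the two time changes.

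I would prove $|I_k(t)|\stackrel{d}{=}E^{1/2^k}(t)$ by induction on $k$. For the base case $k=1$ I would compute both densities explicitly: with the variance-$2t$ normalization the density of $|B_1(t)|$ on $[0,\infty)$ is $(\pi t)^{-1/2}e^{-y^2/(4t)}$, while substituting $\beta=1/2$ and $g_{1/2}(x)=(2\sqrt\pi)^{-1}x^{-3/2}e^{-1/(4x)}$ into the inversion kernel $\frac{t}{\beta}g_\beta(t/s^{1/\beta})\,s^{-1/\beta-1}$ yields exactly the same density for $E^{1/2}(t)$. This is precisely why the paper fixes the clock so that Brownian motion has variance $2t$. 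For the inductive step I would use the composition rule for stable subordinators: if $D^{\beta_1},D^{\beta_2}$ are independent stable subordinators with Laplace exponents $s^{\beta_1},s^{\beta_2}$, then $\E[e^{-s D^{\beta_1}(D^{\beta_2}(u))}]=e^{-u s^{\beta_1\beta_2}}$, so $D^{\beta_1}\circ D^{\beta_2}$ is $\beta_1\beta_2$-stable. Inverting the relation $D(u)>t \iff u>E(t)$ through both levels produces the dual identity $E^{\beta_1\beta_2}(t)\stackrel{d}{=}E^{\beta_2}(E^{\beta_1}(t))$ for the corresponding independent inverse processes.

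Combining these with $\beta_1=1/2^{k-1}$ and $\beta_2=1/2$ gives
\begin{equation*}
E^{1/2^k}(t)\stackrel{d}{=}E^{1/2}\big(E^{1/2^{k-1}}(t)\big)\stackrel{d}{=}\big|B_1\big(E^{1/2^{k-1}}(t)\big)\big|\stackrel{d}{=}\big|B_1(|I_{k-1}(t)|)\big|=|I_k(t)|,
\end{equation*}
where the second relation applies the base case at the outer level and the third is the induction hypothesis (with $B_1$ independent of the inner process). This closes the induction and settles the statement for $I_k$. For the second version I would show $Z(J_k(t))\stackrel{d}{=}X(|I_k(t)|)$: peeling off the outer two-sided Brownian motion $X_1$ and writing $Z=(Z^+,Z^-)$ with $Z^\pm$ independent copies of $X$, the symmetry $X_1(s)\stackrel{d}{=}-X_1(s)$ together with $Z^+\stackrel{d}{=}Z^-$ gives $Z(X_1(s))\stackrel{d}{=}Z^+(|X_1(s)|)$ at each level, and iterating this down the composition replaces every signed Brownian motion by its absolute value, reducing the second claim to the first. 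The main obstacle I anticipate is getting the composition identity for the inverse stable subordinators exactly right---both the multiplication of indices and, more delicately, the reversal of the order of composition under inversion---since this is the one place where a careless argument would silently produce the wrong fractional exponent.
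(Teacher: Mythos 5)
Your proposal is correct and follows essentially the same route as the paper: reduce to the Baeumer--Meerschaert--Nane equivalence theorem by conditioning on the independent time change, establish the one-dimensional identity $|I_k(t)|\stackrel{d}{=}E^{1/2^k}(t)$ by induction using the composition rule for (inverse) stable subordinators, and handle the $J_k$ version by a conditioning/symmetry argument. The only difference is one of self-containedness: you verify directly (explicit densities for the base case, Laplace transforms for Bochner's composition rule, the sign-peeling argument for $J_k$) the facts that the paper's Lemmas 3.1--3.2 simply cite from the literature and its proof dismisses as ``a simple conditioning argument.''
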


\begin{remark}
Only trivial extensions to special cases of the formulation in Allouba and Zheng \cite{allouba1} lead to the fact that the
expressions with $I_k(t)$ and $J_k(t)$ in Theorem \ref{pde-conn-n} are interchangeable in the solution expressions of
the given PDEs ( see  Theorem 0.1 and its proof  in Allouba and 
Zheng \cite{allouba1}).
\end{remark}

Let $D $ be a  domain in $\RR{R}^d$.
 We define the following
spaces of functions.
\begin{eqnarray}
C(D)&=&\{u:D\to \rr :\ \ u\  \mathrm{is\
continuous}\};\nonumber\\
C(\bar D)&=&\{u:\bar D\to \rr :\ \ u \ \mathrm{is \  uniformly\
continuous}\};\nonumber\\
C^j(D)&=&\{u:D\to \rr :\ \ u\  \mathrm{is}\ j\mathrm{-times\  continuously\ differentiable}\};\nonumber\\
C^j(\bar D)&=&\{u\in C^j(D) :\ \ D^\gamma u\  \mathrm{is\
uniformly  \  continuous \ for \ all}\  |\gamma|\leq j\}.\nonumber
\end{eqnarray}
Thus, if $u\in C^j(\bar D)$, then $D^\gamma u$ continuously
extends to $\bar D $ for each multi-index $\gamma$\  with
$|\gamma|\leq j$.

We define the spaces of functions $C^\infty(D)=\cap_{j=1}^\infty
C^j(D)$ and $C^\infty(\bar D)=\cap_{j=1}^\infty C^j(\bar D)$.

Also, let $ C^{j,\alpha}( D)$ ($ C^{j,\alpha}(\bar D)$) be
the subspace of $ C^{j}( D)$ ($ C^{j}(\bar D)$) that consists of
functions whose $j$-th order partial derivatives are uniformly
H\"older continuous with exponent $\alpha$ in $D$.
For simplicity, we will write
$$
C^{0,\alpha}(D)=C^\alpha(D), \ \ \ C^{0,\alpha}(\bar D)=C^\alpha(\bar D)
$$
with the understanding that $0<\alpha<1$ whenever this notation is used, unless otherwise stated.

 We
use $C_c(D), C_c^j(D),C^{j,\alpha}_c( D) $ to denote those
functions in $C(D), C^j(D), C^{j,\alpha}( D)$ with compact
support.

A subset $D$ of $\rd$ is an $l$-dimensional manifold with boundary
if every point of $D$ possesses a neighborhood diffeomorphic to an
open set in the space $H^l$, which is the upper half space in
$\RR{R}^l$. Such a diffeomorphism is called a local
parametrization of $D$. The boundary of $D$, denoted by $\partial
D $, consists of those points that belong to the image of the
boundary of $H^l$ under some local parametrization. If the
diffeomorphism and its inverse are $C^{j,\alpha}$ functions, then we
write $\partial D \in C^{j,\alpha}$.

Since we are working on a bounded domain, the Fourier transform methods in \cite{Zsolution} are not useful.  Instead we will employ Hilbert space methods used in \cite{MNV}.  Hence, given a complete orthonormal basis $\{\psi_n(x)\}$ on $L^2(D)$, we will call
\begin{equation*}\begin{split}
\bar{u}(t,n)&=\int_{D}\psi_n(x)u(t,x)dx ; \\
\hat{u}(s,n)&=\int_{D}\psi_n(x)\int_{0}^{\infty}e^{-st} u(t,
x)dtdx = \int_{D} \psi_{n}(x) \tilde{u}(s, x) dx.
\end{split}\end{equation*}
the $\psi_n$, and $\psi_n$-Laplace transforms, respectively.  Since $\{\psi_n\}$ is a complete orthonormal basis for $L^2(D)$, we can invert the $\psi_n$-transform
\[u(t,x)=\sum_n \bar{u}(t,n) \psi_n(x)\]
for any $t>0$, where the sum converges in the $L^2$ sense (e.g., see \cite[Proposition 10.8.27]{Royden}).

 Mittag-Leffler function is defined by
$E_\beta(z)=\sum_{k=0}^{\infty}\frac{z^k}{\Gamma (1+\beta k)} ,$ $0<\beta<1$;
see, for example, \cite{MG-ML}.

Let $\beta\in (0,1)$, $D_\infty=(0,\infty )\times D$  and define

\begin{eqnarray}
\mathcal{H}_\Delta(D_\infty)&\equiv & \left\{u:D_\infty\to \rr :\ \
\frac{\partial}{\partial t}u, \frac{\partial^\beta}{\partial t^\beta}u, \Delta u\in C(D_\infty),\right.\nonumber\\
& &\left. \left|\frac{\partial}{\partial t}u(t,x)\right|\leq g(x)t^{\beta -1}, g\in L^\infty(D), \ t>0 \right\}.\nonumber
\end{eqnarray}

Let $\tau_D(X)=\inf\{t\geq 0: \ X(t)\notin D\}$ be the first exit time of the process $X$
 from $D$. We denote by $\{\phi_n, \lambda_n, n\geq 1\}$ the set of eigenvalues $\lambda_n$ and corresponding eigenfunctions $\phi_n$ of the
Laplacian $\Delta$ with Dirichlet boundary conditions:
$$\Delta \phi_n=-\lambda_n\phi_n \ \mathrm{ in }\ D;\ \phi|_{\partial D}=0.$$

We will write $u\in C^k(\bar D)$ to mean that for each fixed $t>0$, $u(t,\cdot)\in C^k(\bar D)$, and
 $u\in C_b^k(\bar D_\infty)$ to mean that $u\in C^k(\bar D_\infty)$ and is bounded.

 Extending the Fourier-Laplace transform method to bounded domains, Meerschaert, Nane
 and Vellaisamy \cite{MNV} gave a stochastic as well as  an analytic solution to fractional
 Cauchy problem \eqref{frac-derivative-0} in bounded domains:
 Let $0<\gamma<1$. Let $D$ be a bounded domain with $\partial D \in
 C^{1,\gamma}$, and
  $T_D(t)$ be the killed semigroup of Brownian motion  $\{X_t\}$ in $D$. Let $E^\beta(t)$ be the process inverse
  to a stable subordinator of index $\beta\in (0,1)$ independent of $\{X(t)\}$.
  Let $f\in D(\Delta)\cap C^1(\bar D)\cap C^2(D)$ for which the
  eigenfunction expansion (of $\Delta f$) with respect to the complete orthonormal basis $\{\phi_n:\ n\in \N \}$ converges uniformly and absolutely.
Then the unique (classical) solution of


\begin{eqnarray}
 u & \in &
\mathcal{H}_\Delta(D_\infty)\cap C_b(\bar D_\infty) \cap C^1(\bar D); \nonumber\\
\frac{\partial^\beta}{\partial t^\beta}u(t,x) &=&
\Delta u(t,x);  \  \ x\in D, \ t>0;\label{frac-derivative-bounded-d}\\
u(t,x)&=&0, \ x\in \partial D, \ t>0; \nonumber\\
u(0,x)& =& f(x), \ x\in D\nonumber
\end{eqnarray}
is given by
\begin{eqnarray}
u(t,x)&=&\E_{x}[f(X(E^\beta({t})))I( \tau_D(X)> E^\beta(t))]\nonumber\\
&=& \frac{t}{\beta}\int_{0}^{\infty}T_D(l)f(x)g_{\beta}
(tl^{-1/\beta })l^{-1/\beta -1}dl= \int_{0}^{\infty}T_D((t/l)^\beta)f(x)g_{\beta}
(l)dl \nonumber\\
&=&
\sum_{n=1}^\infty \bar f(n)\phi_n(x)E_\beta(-\lambda_n
t^\beta).\label{mittag-series-0}
\end{eqnarray}
\begin{remark}
The analytic representation \eqref{mittag-series-0} of the solution is due to Agrawal \cite{agrawal} in the case $D=(0, M)$ is an interval in $\rr$.
For $\beta=1$, the study of the Cauchy problem \eqref{frac-derivative-bounded-d}  boils down to studying heat equation in  bounded domains with Dirichlet boundary conditions which has solution \eqref{mittag-series-0}. In this case $E_1(-\lambda_n
t)=e^{-t\lambda_n}$. This is valid under much less requirements on the initial function and the regularity of the boundary, see for example \cite{bass}. The solution to \eqref{frac-derivative-bounded-d} is also  given by \eqref{mittag-series-0} if we replace Brownian motion with a diffusion process in which case the Laplacian $\Delta$ should be replaced with the diffusion operator, see Meerschaert, et. al. \cite{MNV}.
\end{remark}

Let
\begin{eqnarray}
\mathcal{H}_{\Delta^m}(D_\infty)&\equiv & \bigg\{u:D_\infty\to \rr :\ \big|\frac{\partial}{\partial t}u(t,x)\big|\leq g(x)t^{1/m-1}, g\in L^\infty(D), \ t>0,\nonumber\\
&&\frac{\partial}{\partial t}u,  \Delta ^k u\in C(D_\infty), k=1,\cdots, m, \Delta^{k} u \in C^1(\bar D), k=1,\cdots, m-1  \bigg\}.\nonumber
\end{eqnarray}


Using the equivalence of fractional Cauchy problem \eqref{frac-derivative-bounded-d} with the  Higher order Cauchy problems \eqref{one-third-m}  with  the correct Dirichlet type Boundary conditions we obtain the second main result in this paper.
 \begin{theorem}\label{ibm-pde-thm}
Let $m=2,3,\cdots,$ be an integer.
Let $D$ be a bounded domain with $\partial D \in
 C^{1,\gamma}$, and
  $T_D(t)$ be the killed semigroup of Brownian motion  $\{X_t\}$ in $D$.
Let $\{E^{1/m}(t)\}$ be the process inverse  to
a stable subordinator of index $1/m$ independent of $\{X(t)\}$.
 Let $f\in D(\Delta)\cap C^{2m-3}(\bar D)\cap C^{2m-2}(D) (\subset L^2(D))$ be  such that the eigenfunction
 expansion  of $\Delta^{m-1} f$ with respect to $\{\phi_n: \ n\geq 1\}$
 converges absolutely and uniformly.
Then  the (classical)  solution of


\begin{eqnarray}
u&\in & \mathcal{H}_{\Delta ^m}(D_\infty)\cap C_b(\bar D_\infty)\cap C^1(\bar D);\nonumber\\
\frac{\partial u(t,x)}{\partial t} & =&\sum_{j=1}^{m-1} \frac{t^{j/m-1}}{\Gamma(j/m)} \Delta^j f(x)
+ \Delta^m u(t,x), \  \ x\in D, \ t> 0;\label{ibm-pde-bounded-02}\\
u(t,x)&=&{\Delta}^lu(t,x)=0, \ t\geq 0, \ x\in \partial D, \ l=1,\cdots m-1; \nonumber\\
u(0,x) &=& f(x),\ x\in D\nonumber
\end{eqnarray}
is given by
\begin{eqnarray}
u(t,x)&=&\E_{x}[f(X(E^{1/m}({t})))I( \tau_D(X)> E^{1/m}(t))]\nonumber\\
&=&\E_{x}[f(X(E^{1/m}({t})))I( \tau_D(X(E^{1/m}))>
t)]\label{higher-order-solution}\\
&=& tm\int_{0}^{\infty}T_D(l)f(x)g_{1/m}
(tl^{-m})l^{-m -1}dl\nonumber\\
&=&
\sum_{n=1}^\infty \bar f(n)\phi_n(x)E_{1/m}(-\lambda_n
t^{1/m}).\nonumber
\end{eqnarray}
In the case $m=2^k$ for some integer $k\geq 1$, the solution to \eqref{ibm-pde-bounded-02} is also given by $u(t,x)=\E_{x}[f(X(|I_k(t)|))I( \tau_D(X)> |I_k(t)|)].$ For   $\beta=1/2^k$ the solution to \eqref{frac-derivative-bounded-d} is also given by $u(t,x)=\E_{x}[f(X(|I_k(t)|))I( \tau_D(X)> |I_k(t)|)].$
 \end{theorem}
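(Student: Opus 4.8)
The plan is to expand the candidate solution in the Dirichlet eigenbasis $\{\phi_n\}$ of $\Delta$ and thereby reduce the higher order Cauchy problem \eqref{ibm-pde-bounded-02} to a countable family of scalar problems, one per eigenmode. The Mittag-Leffler series \eqref{mittag-series-0}, namely $u(t,x)=\sum_n \bar f(n)\phi_n(x)E_{1/m}(-\lambda_n t^{1/m})$, is already known from the result of Meerschaert, Nane and Vellaisamy to solve the fractional problem \eqref{frac-derivative-bounded-d} with $\beta=1/m$ (their hypotheses are implied by the stronger regularity assumed here, and $1/m\in(0,1)$ for $m\geq 2$). I would therefore take this series as the candidate and show that it \emph{also} solves \eqref{ibm-pde-bounded-02}, that it lies in the stated function class, that it is the unique such solution, and finally that it admits the claimed stochastic representations.

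First, set $g_n(t)=E_{1/m}(-\lambda_n t^{1/m})$. Differentiating the defining series term by term and using $\Gamma(1+x)=x\Gamma(x)$ gives $g_n'(t)=\sum_{k\geq 1}(-\lambda_n)^k t^{k/m-1}/\Gamma(k/m)$. Splitting this sum at $k=m$ and reindexing the tail by $i=k-m$ (this is exactly the scalar instance of the equivalence of \eqref{one-third-m} and \eqref{frac-derivative-m} with generator $L=-\lambda_n$) yields the ODE $g_n'(t)=\sum_{j=1}^{m-1}\frac{t^{j/m-1}}{\Gamma(j/m)}(-\lambda_n)^j+(-\lambda_n)^m g_n(t)$ with $g_n(0)=E_{1/m}(0)=1$. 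Since $\Delta^j\phi_n=(-\lambda_n)^j\phi_n$ and $\Delta^j f=\sum_n \bar f(n)(-\lambda_n)^j\phi_n$, multiplying by $\bar f(n)\phi_n(x)$ and summing over $n$ shows, formally, that $u$ satisfies \eqref{ibm-pde-bounded-02}; the initial condition is immediate from $g_n(0)=1$, and each boundary condition $\Delta^l u|_{\partial D}=0$ holds because every term $(-\lambda_n)^l\phi_n(x)$ vanishes on $\partial D$.

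The main obstacle is to make this termwise argument rigorous, i.e. to verify that $u\in\mathcal{H}_{\Delta^m}(D_\infty)\cap C_b(\bar D_\infty)\cap C^1(\bar D)$ and that term-by-term application of $\partial_t$ and of $\Delta,\dots,\Delta^m$ is justified. Here I would use the bound $|E_{1/m}(-\lambda_n t^{1/m})|\leq C/(1+\lambda_n t^{1/m})$: it gains one power of $\lambda_n$, so $\lambda_n^m E_{1/m}(-\lambda_n t^{1/m})$ grows only like $\lambda_n^{m-1}t^{-1/m}$, which is precisely controlled by the hypothesis that the expansion of $\Delta^{m-1}f=\sum_n \bar f(n)(-\lambda_n)^{m-1}\phi_n$ converges absolutely and uniformly. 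The same bound applied to $g_n'$ through the ODE delivers the estimate $|\partial_t u(t,x)|\leq g(x)t^{1/m-1}$ with $g\in L^\infty(D)$ needed for membership in $\mathcal{H}_{\Delta^m}(D_\infty)$, while the $C^{2m-3}(\bar D)\cap C^{2m-2}(D)$ regularity of $f$ together with uniform convergence yields continuity of $\Delta^k u$ up to $\partial D$ and hence the boundary conditions for $l=1,\dots,m-1$. Uniqueness follows by the $\phi_n$-transform (Hilbert space) method: for any solution $u$ in the class, integrating $\int_D \phi_n\,\Delta^m u\,dx$ by parts $m$ times and using the $m$ Dirichlet-type conditions $u=\Delta u=\cdots=\Delta^{m-1}u=0$ on $\partial D$ together with $\phi_n|_{\partial D}=0$ transfers every derivative onto $\phi_n$, giving $(-\lambda_n)^m\bar u(t,n)$; thus $\bar u(t,n)$ solves the scalar ODE above with $\bar u(0,n)=\bar f(n)$, whose unique solution is $\bar f(n)g_n(t)$, so $u=\sum_n \bar u(t,n)\phi_n$ is determined. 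It is exactly these $m$ conditions that force the boundary terms in the repeated integration by parts to vanish, which is why the higher order problem requires more boundary data than \eqref{frac-derivative-bounded-d}.

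Finally, for the stochastic representations I would specialize the representation of Meerschaert, Nane and Vellaisamy for \eqref{frac-derivative-bounded-d} to $\beta=1/m$: conditioning on the inner time and using the density $tm\,g_{1/m}(tl^{-m})l^{-m-1}$ of $E^{1/m}(t)$ gives $u(t,x)=\int_0^\infty T_D(l)f(x)\,tm\,g_{1/m}(tl^{-m})l^{-m-1}\,dl=\E_x[f(X(E^{1/m}(t)))I(\tau_D(X)>E^{1/m}(t))]$, the two forms of the indicator in \eqref{higher-order-solution} being equal because continuity and monotonicity of $t\mapsto E^{1/m}(t)$ identify $\{\tau_D(X)>E^{1/m}(t)\}$ with $\{\tau_D(X(E^{1/m}))>t\}$. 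For $m=2^k$ the only extra ingredient is the distributional identity $|I_k(t)|\stackrel{d}{=}E^{1/2^k}(t)$, which is exactly what underlies Theorem \ref{pde-conn-n}; since the time process is independent of $X$, we have $\E_x[f(X(S))I(\tau_D(X)>S)]=\int_0^\infty T_D(l)f(x)\,\mu_S(dl)$ depending on $S$ only through its law, so replacing $E^{1/m}(t)$ by $|I_k(t)|$ leaves the value unchanged and gives $u(t,x)=\E_x[f(X(|I_k(t)|))I(\tau_D(X)>|I_k(t)|)]$; the identical substitution applies to \eqref{frac-derivative-bounded-d} with $\beta=1/2^k$.
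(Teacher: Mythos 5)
Your proposal follows the same overall strategy as the paper's proof: reduce \eqref{ibm-pde-bounded-02} to scalar problems via the $\phi_n$-transform (justified by Green's identities and the Dirichlet-type boundary conditions, as in Lemma \ref{greens-2}), identify each mode as a Mittag-Leffler function, control the series using $0\leq E_{1/m}(-\lambda_n t^{1/m})\leq c/(1+\lambda_n t^{1/m})$ together with the assumed absolute and uniform convergence of the expansion of $\Delta^{m-1}f$, prove uniqueness through the transform, and obtain the stochastic forms from the density of $E^{1/m}(t)$ and the identity $|I_k(t)|\stackrel{d}{=}E^{1/2^k}(t)$ of Lemma \ref{1-d-equivalence}. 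The one step you do genuinely differently is the scalar identity itself: you differentiate the Mittag-Leffler power series termwise and reindex the tail at $k=m$ to get $g_n'(t)=\sum_{j=1}^{m-1}\frac{t^{j/m-1}}{\Gamma(j/m)}(-\lambda_n)^j+(-\lambda_n)^m g_n(t)$, whereas the paper takes Laplace transforms in $t$, solves algebraically for $\hat u(s,n)$ as in \eqref{direct-ibm}, and collapses it with the factorization $(a-b)(a^{m-1}+\cdots+b^{m-1})=a^m-b^m$ to recognize the Mittag-Leffler Laplace transform \eqref{frac-phi-laplace}. Your computation is more elementary and is exactly the scalar instance of the equivalence of \eqref{one-third-m} and \eqref{frac-derivative-m}; the paper's route has the advantage of deriving the solution formula from the PDE rather than verifying a guessed candidate, with uniqueness of Laplace transforms doing the identification.

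There is, however, a genuine gap in your verification that $u\in\mathcal{H}_{\Delta^m}(D_\infty)$, namely the global bound $|\partial_t u(t,x)|\leq g(x)t^{1/m-1}$ for all $t>0$. You propose to obtain it by "applying the same bound to $g_n'$ through the ODE," but the triangle inequality on the right-hand side of the ODE produces terms of order $t^{j/m-1}$ for $j=1,\dots,m-1$ together with $\lambda_n^m E_{1/m}(-\lambda_n t^{1/m})\lesssim\lambda_n^{m-1}t^{-1/m}$, so the resulting estimate decays only like $t^{-1/m}$ as $t\to\infty$. Since $t^{-1/m}\gg t^{1/m-1}$ for large $t$ whenever $m\geq 3$, this does not establish the required inequality (it works only on bounded time intervals, and for $m=2$ where the exponents coincide); the individual terms of the ODE are large and cancel. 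One must instead estimate the derivative directly, via Kr\"{a}geloh's bound $\left|\frac{d}{dt}E_\beta(-\lambda_n t^\beta)\right|\leq c\lambda_n t^{\beta-1}/(1+\lambda_n t^\beta)\leq c\lambda_n t^{\beta-1}$ (the paper's \eqref{time-derivative}), which combined with convergence of $\sum_n\lambda_n|\bar f(n)\phi_n(x)|$ gives the bound uniformly in $t$. A second, smaller gloss: membership in $C^1(\bar D)$, and $\Delta^l u\in C^1(\bar D)$ for $l\leq m-1$ as demanded by $\mathcal{H}_{\Delta^m}$, does not follow from uniform convergence of the scalar series alone; one needs control of the eigenfunctions' gradients, e.g. the Schauder-type estimate $|\phi_n|_{1,\alpha;D}\leq C(1+\lambda_n)\sup_D|\phi_n(x)|$ (the paper's \eqref{uniform-eigenvalue-bounds}), after which the extra factor of $\lambda_n$ is again absorbed by the hypothesis on $\Delta^{m-1}f$.
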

 Meerschaert, Nane and Vellaisamy \cite{MNV} proved this theorem in the case $m=2$.
\begin{remark}
Theorem  \ref{ibm-pde-thm} also holds with the version of $k$-iterated  Brownian motion $J_k(t)$. Here, the outer process $X(t)$ is a two-sided
Brownian motion and $J_k(t)$ is an
independent $k$-iterated  Brownian motion. In this case,  using a simple conditioning argument,
we can show that
the function
$$u(t,x)=E_x[f(X(J_k(t)))I(  -\tau_D(X^-)<J_k(t)<\tau_D(X^+))]$$
reduces to Equation \eqref{higher-order-solution} and hence is also a solution to both Cauchy problems
\eqref{ibm-pde-bounded-02} and \eqref{frac-derivative-bounded-d} with $m=2^k$.
\end{remark}

In \cite{nane-h}, we studied the Cauchy problems that can be solved by running $\alpha$-time processes with $0<\alpha\leq 2$. An $\alpha$-time process is a Markov process in which the time parameter is replaced with the
 absolute value of an independent symmetric $\alpha$-stable process $Y$ with $0<\alpha\leq 2$.

  As a special case in  Nane \cite[Theorem 2.1]{nane-h} we established:
Let
 $\{X(t)\}$ be a continuous Markov process with generator $\mathcal{A}$, and let $\{Y(t)\}$ be a Cauchy process independent of $\{X(t)\}$. Let $f$ be a bounded measurable
function in the domain of $\mathcal{A}$, with $D_{ij}f$ bounded and
H\"{o}lder continuous for all $1\leq i,\ j \leq d$. Then
$u(t,x)=\E_x[f(X(|Y(t)|)]$ is a solution of
\begin{eqnarray}
\frac{\partial^{2}}{\partial t^{2}}u(t,x)\ & = &-\frac{2\mathcal{A}
f(x)}{\pi t}\ - \ \ \mathcal{A} ^{2} u(t,x);\quad
 u(0,x)  = f(x)\nonumber
\end{eqnarray}
for $t>0, \ \ x\in \RR{R}^{d}$.

This reduces to {\bf nonhomogeneous wave equation} in the case $X$ is another Cauchy process independent of $Y$, the generator $\mathcal{A}=-(-\Delta)^{1/2}$, fractional Laplacian, i.e.,
$u(t,x)=\E_x[f(X(|Y(t)|)]$ is a solution of
\begin{eqnarray}
\frac{\partial^{2}}{\partial t^{2}}u(t,x)\ & = &\frac{2(-\Delta)^{1/2}
f(x)}{\pi t}\ + \ \ \Delta u(t,x);\quad
 u(0,x)  = f(x)\nonumber
\end{eqnarray}
for $t>0, \ \ x\in \RR{R}^{d}$.
This is one of the most interesting PDE connections of these iterated processes.

Let $$
\mathcal{K}_{\Delta ^2}=\{u:D_\infty\to \rr:\ \ \frac{\partial^2}{\partial t^2}u, \Delta u, \Delta^2 u\in C(D_\infty) \}
$$
 In bounded domains we obtain the following
  \begin{theorem}\label{cauchy-bounded}
 Let $D$ be a bounded domain with $\partial D\in C^{1,\gamma},  0< \gamma <1.$
Let
 $\{X(t)\}$ be a Brownian motion in $\rd$ with independent components, and let $\{Y(t)\}$ be a Cauchy process independent of $\{X(t)\}$. Let $f\in D(\Delta)\cap C^2(D)$ for which the eigenfunction expansion of $\Delta f$  with respect to the complete orthonormal basis $\{\phi_n:\ n\geq 1\}$ converges absolutely and uniformly.
Then
$u(t,x)=\E_x[f(X(|Y(t)|)I(\tau_D(X)>|Y(t)|)]$ is a solution of
\begin{eqnarray}
u &\in &  \mathcal{K}_{\Delta ^2}\cap C_b(D_\infty)\cap C^1(\bar D)\nonumber\\
\frac{\partial ^{2}}{\partial t^{2}}u(t,x)\ & =  & -\frac{2\Delta f(x)}{\pi t}\ - \ \Delta^{2}u(t,x),
\  \  \ t>0, \ \ x\in D\label{PDE-CONNECT2-bounded}\\
u(0,x) & =  & \ f(x), \ \ \ \ \ x \in D,\nonumber\\
u(t,x)& = &\Delta u(t,x) = 0,\ \ x\in \partial D, \ t\geq 0.\nonumber
\end{eqnarray}

\end{theorem}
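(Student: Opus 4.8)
\emph{Proof proposal.} The plan is to verify directly that the stated probabilistic expression solves the Cauchy problem; since only existence of a solution is asserted, no uniqueness argument is needed. First I would condition on the value of $|Y(t)|$. As $Y$ is a symmetric Cauchy process, $|Y(t)|$ has density $p_t(l)=\frac{2}{\pi}\frac{t}{t^2+l^2}$ on $(0,\infty)$ --- the (doubled) Poisson kernel of the upper half-plane --- so independence of $X$ and $Y$ gives
\[
u(t,x)=\int_0^\infty \E_x\big[f(X(l))\,I(\tau_D(X)>l)\big]\,p_t(l)\,dl=\int_0^\infty T_D(l)f(x)\,p_t(l)\,dl .
\]
Substituting the spectral representation $T_D(l)f(x)=\sum_n \bar f(n)e^{-\lambda_n l}\phi_n(x)$ of the killed Brownian semigroup (as in \cite{MNV}) and interchanging sum and integral gives $u(t,x)=\sum_n \bar f(n)\phi_n(x)\,G(t,\lambda_n)$ with $G(t,\lambda)=\int_0^\infty e^{-\lambda l}p_t(l)\,dl$.

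The key step is an ordinary differential equation for $G$. The crucial observation is that $p_t(l)$, being a Poisson kernel, is harmonic in the half-plane, so $\partial_t^2 p_t(l)=-\partial_l^2 p_t(l)$. Transferring the two $t$-derivatives onto $l$ and integrating by parts twice in $l$, with the boundary data at $l=0$ governed by $p_t(0)=\frac{2}{\pi t}$ and $\partial_l p_t(0)=0$, I would obtain
\[
\partial_t^2 G(t,\lambda)=\frac{2\lambda}{\pi t}-\lambda^2 G(t,\lambda).
\]
Together with $G(t,\lambda)\to 1$ as $t\to 0^+$ (the kernel $p_t$ is an approximate identity, equivalently $|Y(t)|\to 0$), this pins down the time behaviour of each coefficient $\bar u(t,n)=\bar f(n)G(t,\lambda_n)$.

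Next I would take the $\phi_n$-transform of \eqref{PDE-CONNECT2-bounded}. Using Green's identity with $\phi_n|_{\partial D}=0$, $\Delta\phi_n=-\lambda_n\phi_n$, and the boundary conditions $f=u=\Delta u=0$ on $\partial D$ (membership $f\in D(\Delta)$ supplying $f|_{\partial D}=0$), all boundary integrals vanish and one gets $\overline{\Delta f}(n)=-\lambda_n\bar f(n)$ and $\overline{\Delta^2 u}(t,n)=\lambda_n^2\bar u(t,n)$. Hence the transformed PDE reads $\partial_t^2\bar u(t,n)=\frac{2\lambda_n\bar f(n)}{\pi t}-\lambda_n^2\bar u(t,n)$, which is exactly the ODE of the previous step satisfied by $\bar u(t,n)=\bar f(n)G(t,\lambda_n)$; this establishes the PDE coefficientwise. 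The spatial boundary conditions $u=\Delta u=0$ on $\partial D$ hold termwise since $\phi_n|_{\partial D}=0$ and $\Delta\phi_n=-\lambda_n\phi_n$, and the initial condition follows from $G(t,\lambda_n)\to1$, giving $u(t,x)\to\sum_n\bar f(n)\phi_n(x)=f(x)$.

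The main obstacle is to justify the term-by-term application of $\partial_t,\partial_t^2,\Delta,\Delta^2$ and to secure the regularity $u\in\mathcal K_{\Delta^2}\cap C_b(D_\infty)\cap C^1(\bar D)$. The governing estimate is the large-$\lambda$ asymptotic $G(t,\lambda)\sim\frac{2}{\pi t\lambda}$ for fixed $t>0$, so that $\lambda_n^2 G(t,\lambda_n)\asymp\lambda_n/t$; consequently the coefficients of both $\Delta^2 u=\sum_n\lambda_n^2\bar f(n)\phi_n(x)G(t,\lambda_n)$ and of $\partial_t^2 u$ are comparable to $\lambda_n\bar f(n)$, and their absolute and locally uniform convergence is guaranteed precisely by the hypothesis that the eigenfunction expansion of $\Delta f$ converges absolutely and uniformly. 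The heavy tail of the Cauchy process does no harm in the defining integral because $T_D(l)f$ decays exponentially in $l$. The delicate part is to produce bounds on $G$ and its $t$-derivatives that are uniform in $x$ and locally uniform in $t$, so that the operators may be interchanged with the sum and the resulting series are continuous on $D_\infty$ and $C^1$ up to $\partial D$; I expect these to follow by combining the spectral estimates above with the boundary regularity $\partial D\in C^{1,\gamma}$ and Schauder-type bounds for $T_D(l)f$, paralleling the treatment of the fractional case in \cite{MNV}.
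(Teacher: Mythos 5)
Your proposal is correct and follows essentially the same route as the paper's proof: conditioning on $|Y(t)|$ to write $u(t,x)=\int_0^\infty T_D(l)f(x)\,p_t(l)\,dl$, expanding the killed semigroup in eigenfunctions, exploiting harmonicity of the Cauchy (Poisson) kernel together with two integrations by parts (using $p_t(0)=2/(\pi t)$ and $\partial_l p_t(0)=0$) to verify the equation termwise, and using the bound $G(t,\lambda_n)\leq 2/(\pi t\lambda_n)$ plus the absolute and uniform convergence of the expansion of $\Delta f$ to justify term-by-term application of $\partial_t^2$ and $\Delta^2$. The only cosmetic difference is that you package the termwise computation as an ODE for $G(t,\lambda)$ and phrase the verification through the $\phi_n$-transform, which is precisely what the paper's direct termwise verification amounts to.
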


\begin{remark}
In Theorem \ref{ibm-pde-thm}, the solution is expressed by subordinating a killed process by  an increasing process $E^{1/m}(t)$, inverse process to a stable subordinator of index $1/m$. And we see from Theorem \ref{ibm-pde-thm} that killing the outer process and then subordinating, or subordinating and then killing gives the same solution of the Cauchy problem \eqref{ibm-pde-bounded-02}. The solution in the case $m=2^k$ is also given by subordinating the killed process by $k$-iterated Brownian motion. Is there an increasing  process $A(t)$ such that we get a similar relation for the solution of \eqref{PDE-CONNECT2-bounded}?
\end{remark}

\begin{remark}
We discuss how to extend Theorems \ref{pde-conn-n}, \ref{ibm-pde-thm} and \ref{cauchy-bounded} to other continuous Markov processes  in section 6.
\end{remark}
This paper is organized as follows. In section 2 we give some preliminaries. Section 3 is devoted to the proof of Theorem \ref{pde-conn-n}. We prove Theorem \ref{ibm-pde-thm} in section 4. Theorem \ref{cauchy-bounded} is proved in section 5. We  also state and prove a theorem for Cauchy problems that can be solved by running a Brownian motion subordinated to the absolute value of a symmetric $\alpha$-stable process with $\alpha\in (0,2)$ a rational, and $\alpha\neq 1$. We discuss extensions of the theorems proved in this paper to other types of Markov process in section 6.
\section{preliminaries}

Let $X_0(t)$ be a L\'evy process started at zero and
$X(t)=x+X_0(t)$ for $x\in\rd$, the generator $L_x$ of the
semigroup $T(t)f(x)=\E_x[f(X(t))]$ is a pseudo-differential
operator \cite{applebaum,Jacob,schilling} that can be explicitly
computed by inverting the L\'evy representation. The L\'{e}vy
process $X_0(t)$ has characteristic function $$\E[\exp(ik\cdot
X_0(t))]=\exp(t\psi(k))$$ with
$$
\psi (k)=ik\cdot a-\frac{1}{2}k\cdot Qk+ \int_{y\neq 0}\left(
e^{ik\cdot y}-1-\frac{ik\cdot y}{1+||y||^{2}}\right)\nu(dy),
$$
where $a\in \RR{R}^{d}$, $Q$ is a nonnegative definite matrix, and
$\nu$ is a $\sigma$-finite Borel measure on $\RR{R}^{d}$ such
that
$$
\int_{y\neq 0}\min \{1,||y||^{2}\}\nu(dy)<\infty;
$$
see for example \cite[Theorem 3.1.11]{RVbook} and \cite[Theorem
1.2.14]{applebaum}. Let
$$\hat{f}(k)=\int_{\RR{R}^{d}}e^{-ik\cdot
x}f( x)\,dx$$
denote the Fourier transform. Theorem 3.1 in
\cite{fracCauchy} shows that $L_x f(x)$ is the inverse Fourier
transform of $\psi(k)\hat{f}(k)$ for all $f\in D(L_x)$, where
$$
D(L_x)=\{ f\in L^{1}(\RR{R}^{d}):\ \psi(k)\hat{f}(k)=\hat{h}(k)\
\exists \ h \in L^{1}(\RR{R}^{d})  \},
$$
and
\begin{equation}\begin{split}\label{pseudoDO}
L_x f(x)&= a\cdot\nabla f(x) +\frac{1}{2}\nabla \cdot Q\nabla
f(x)\\
&+ \int_{y \neq  0} \left(  f(x+y)-f(x)-\frac{\nabla
f(x)\cdot y}{1+y^{2}} \right)\nu(dy)
\end{split}\end{equation}
for all $f\in W^{2,1}(\RR{R}^{d})$, the Sobolev space of
$L^{1}$-functions whose first and  second partial derivatives are
all $L^{1}$-functions.  This includes the special case where
$X_0(t)$ is an operator L\'evy motion.  We can also write
$L_x=\psi(-i\nabla)$ where $\nabla=(\partial/\partial
x_1,\ldots,\partial/\partial x_d)'$.  For example, if $X_0(t)$ is
spherically symmetric stable then $\psi(k)=-D\|k\|^\alpha$ and
$L_x=-D(-\Delta)^{\alpha/2}$, a fractional derivative in space,
using the correspondence $k_{j}\to -i\partial/\partial x_j$ for
$1\leq j\leq d$.





We say that $D$ satisfies an {\em exterior cone condition} at a
fixed point $x_0\in \partial D$ if there exists a finite right
circular cone $V=V_{x_0}$ with vertex $x_0$ such that $\bar D\cap
V_{x_0}=x_0$, and
 a {\em uniform exterior cone condition}
if $D$ satisfies an exterior cone condition at every point $x_0\in
\partial D$  and the cones $V_{x_0}$ are all congruent to some
fixed cone $V$.

Let $\{X_t\}$ be a Brownian motion in $\rd$ and $A\subset \rd$. Let
$T_A= \inf \{t>0: \ X_t\in A \}$ be the first hitting time of the set
$A$. We say that a point $y$ is {\em regular} for a set $A$ if
$P_y[T_A=0]=1$.
Note that a point $y$ is regular for a set $A$ provided, starting at $y$,
the process does not go a positive length of time before hitting
$A$.

The right condition for  the existence of the solution to the
Dirichlet problem turns out to be that every point of $\partial D$
is regular for $D^C$ (cf. \cite[Section II.1]{bass}).

If a domain satisfies a uniform exterior cone condition, then every point of $\partial D$ is regular for $D^C$.



Let $\partial D\in C^1$. Then  at each point $x\in \partial D$
there exists a unique outward pointing unit vector
$$
\theta(x)=(\theta_1(x), \cdots, \theta_d(x)).
$$

Let $u\in C^1(\bar{D})$, the set of functions which have
continuous extension of the first derivative up to the boundary.
Let
$$
D_\theta u=\frac{\partial u}{\partial \theta}=\theta\cdot\nabla u,
$$
denote the directional derivative,  where $\nabla u$ is the
gradient vector of $u$.

 Now we recall Green's first and second identities (see, for example, \cite[Section 2.4]{gilbarg-trudinger}). Let $u,v\in C^{2}(D)\cap
 C^1(\bar{D})$. Then

$$
\int_D \frac{\partial u}{\partial x_i } vdx=\int _{\partial D}
uv\theta_ids-\int_D u\frac{\partial v}{\partial x_i }dx \ \
(\mathrm{integration\ by \ parts\ formula}),
$$
 $$
\int_D\nabla v \cdot\nabla u dx =-\int_D u\Delta v dx
+\int_{\partial D}\frac{\partial v}{\partial \theta}uds\ \
(\mathrm{Green's\ \ first\ \  identity}),
 $$
$$
\int_D [u\Delta v-v\Delta u]dx=\int_{\partial
D}\left[u\frac{\partial v}{\partial \theta}-v\frac{\partial
u}{\partial \theta}\right]ds,\ \ (\mathrm{Green's\ \ second\ \
identity}).
$$


Let $D$ be bounded and every point of $\partial D$ be regular for $D^C$.
Markov process corresponding to the Dirichlet problem is a killed Brownian
motion. We denote the eigenvalues and the eigenfunctions of
$\Delta$ by $\{\lambda_n, \phi_n\}_{n=1}^\infty$, where $\phi_n\in C^{\infty}(D)$. The
corresponding heat kernel is given by
$$
p_D(t,x,y)=\sum_{n=1}^{\infty}e^{-\lambda_n t}\phi_n(x)\phi_n(y).
$$
The series
converges absolutely and  uniformly on $[t_0,\infty)\times D\times D$ for all
$t_0>0$. In this case, the semigroup given by
\begin{equation}\label{heat-kernel}
T_D(t)f(x)=E_x[f(X_t)I(
t<\tau_D(X))]=\int_Dp_D(t,x,y)f(y)dy=\sum_{n=1}^{\infty}e^{-\lambda_n
t}\phi_n(x)\bar{f}(n)
\end{equation}
solves the Heat equation in $D$ with Dirichlet boundary conditions:
\begin{eqnarray}
\frac{\partial u(t,x)}{\partial t}&=& \Delta u(t,x),\ \ x\in D, \ t>0, \nonumber\\
u(t,x)&=&0,\ \ x\in \partial D,\nonumber\\
u(0,x)&=&f(x), \ \ x\in D.\nonumber
\end{eqnarray}

\begin{remark}\label{eigen-regularity}
The eigenfunctions belong to $L^\infty(D)\cap C^\infty(D)$,
 by \cite[Corollary 8.11, Theorems 8.15 and 8.24]{gilbarg-trudinger}. If $D$ satisfies a
 {\em uniform exterior cone condition} all the eigenfunctions belong to $C^\alpha(\bar D)$
  by \cite[Theorem 8.29]{gilbarg-trudinger}. If   $\partial D \in C^{1,\alpha}$, then all the eigenfunctions belong to $C^{1,\alpha}(\bar D)$ by \cite[Corollary 8.36]{gilbarg-trudinger}.
  If  $\partial D\in C^\infty,$
then each eigenfunction of $\Delta$ is in $C^\infty(\bar{D})$ by \cite[Theorem 8.13]{gilbarg-trudinger}.
\end{remark}





Inverse stable subordinators arise in \cite{limitCTRW,Zsolution, MNX} as scaling limits of continuous time random walks.
   Let $D(t)$ be a stable subordinator of index $0<\beta <1$. The hitting time of $D$  is defined as
   $E^\beta(t)=\inf\{x:D(x)> t\}$, so that
 $\{E^\beta(t)\leq x\}=\{D(x)\geq t\}$. We will call $E^\beta$ the process inverse to stable subordinator  $D$ in this paper.
  Writing $g_\beta(u)$ for the density of $D(1)$, it follows that $D(t)$ has density $t^{-1/\beta}g_\beta(t^{-1/\beta}u)$ for any $t>0$.  Using the inverse relation $P(E(t)\leq x)=P(D(x)\geq t)$ and taking derivatives, it follows that $E(t)$ has density
\begin{equation}\label{Etdens}
f_t(x)=t\beta^{-1}x^{-1-1/\beta}g_\beta(tx^{-1/\beta}) ,
\end{equation}
whose $t\mapsto s$ Laplace transform $s^{\beta-1}e^{-xs^\beta}$ can also be derived from the equation
\[f_t(x)=\frac d{dx}P(D(x)\geq t)=\frac d{dx}\int_t^\infty x^{-1/\beta}g_\beta(x^{-1/\beta}u)\,du\]
by taking Laplace transforms on both sides.

\section{Cauchy problems}

We prove Theorem \ref{pde-conn-n} in this section. First we need the following Lemmas.

\begin{lemma}\label{compose-et}
Let $E^{\beta_1}, E^{\beta_2} $ be two independent processes that are inverses to stable subordinators of index $0<\beta_1, \beta_2<1$.
Then $E^{\beta_1}( E^{\beta_2}(t))$ is  inverse to a stable subordinator of index $\beta_1\beta_2$.
\end{lemma}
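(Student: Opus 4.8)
The plan is to characterize the composed process through the subordinators it inverts. Write $D^{\beta_1}$ and $D^{\beta_2}$ for the stable subordinators of indices $\beta_1$ and $\beta_2$ whose inverses are $E^{\beta_1}$ and $E^{\beta_2}$, respectively; since $E^{\beta_1}$ and $E^{\beta_2}$ are independent, I may take $D^{\beta_1}$ and $D^{\beta_2}$ independent. Because passing to a generalized inverse reverses the order of composition, the natural guess is that $E^{\beta_1}(E^{\beta_2}(\cdot))$ is the inverse of the subordinated subordinator $S(t):=D^{\beta_2}(D^{\beta_1}(t))$. Accordingly I would split the argument into two parts: first show that $S$ is itself a stable subordinator of index $\beta_1\beta_2$, and then verify that $E^{\beta_1}\circ E^{\beta_2}$ is precisely its inverse in the sense used throughout the paper.

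For the first part, note that $S$ is a subordinator by Bochner subordination: an increasing L\'evy process subordinated by an independent subordinator is again an increasing L\'evy process. To identify it, I would compute its Laplace transform by conditioning on the inner process, using $\E[e^{-\lambda D^{\beta}(t)}]=e^{-t\lambda^{\beta}}$ twice:
\[
\E\!\left[e^{-sS(t)}\right]=\E\!\left[\,\E\!\left[e^{-sD^{\beta_2}(D^{\beta_1}(t))}\,\big|\,D^{\beta_1}\right]\right]=\E\!\left[e^{-D^{\beta_1}(t)\,s^{\beta_2}}\right]=e^{-t\,(s^{\beta_2})^{\beta_1}}=e^{-t\,s^{\beta_1\beta_2}}.
\]
Since $0<\beta_1\beta_2<1$ and the Laplace exponent $s^{\beta_1\beta_2}$ determines the law uniquely, $S$ is a stable subordinator of index $\beta_1\beta_2$.

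For the second part, I would use the pathwise duality $\{E^{\beta_i}(t)\le x\}=\{D^{\beta_i}(x)\ge t\}$ recorded in Section~2. Applying it first to $E^{\beta_1}$ with argument $E^{\beta_2}(t)$, and then to $E^{\beta_2}$ with the random but independent argument $D^{\beta_1}(x)$, gives
\[
\{E^{\beta_1}(E^{\beta_2}(t))\le x\}=\{D^{\beta_1}(x)\ge E^{\beta_2}(t)\}=\{E^{\beta_2}(t)\le D^{\beta_1}(x)\}=\{D^{\beta_2}(D^{\beta_1}(x))\ge t\}=\{S(x)\ge t\}.
\]
This is exactly the defining duality for the inverse of $S$, so $E^{\beta_1}(E^{\beta_2}(t))$ is the process inverse to the stable subordinator $S$ of index $\beta_1\beta_2$, as claimed.

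The main obstacle is the bookkeeping for generalized inverses rather than any deep estimate: because $D^{\beta_i}$ has jumps, $E^{\beta_i}$ is continuous but constant across the corresponding flat intervals, so one must treat the strict-versus-nonstrict inequalities in the duality with care and confirm that the duality, which is a pathwise identity, remains valid when the argument $D^{\beta_1}(x)$ is random yet independent of the pair $(D^{\beta_2},E^{\beta_2})$. Once the duality is applied cleanly, the identification of the index is immediate from the Laplace-transform computation.
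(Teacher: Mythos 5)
Your proposal is correct and follows essentially the same route as the paper: the paper's one-line proof cites Bochner for the fact that composing stable subordinators yields a stable subordinator of index $\beta_1\beta_2$ (which you verify directly via the Laplace exponent $s \mapsto s^{\beta_1\beta_2}$) and then asserts "the result follows," which is precisely your inverse-reversal step $E^{\beta_1}\circ E^{\beta_2} = \bigl(D^{\beta_2}\circ D^{\beta_1}\bigr)^{-1}$ made explicit through the duality $\{E^{\beta_i}(t)\le x\}=\{D^{\beta_i}(x)\ge t\}$. Your writeup is a careful expansion of the paper's terse argument, including the bookkeeping about random arguments in the pathwise duality that the paper leaves implicit.
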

\begin{proof}
Since composition of stable subordinators gives another stable subordinator, see Bochner \cite{bochner}, the result follows as $E^\beta$ is the inverse of a stable subordinator.
\end{proof}
\begin{lemma}\label{1-d-equivalence}
For fixed $t\geq 0$, k-iterated Brownian motion
$$
|I_{k}(t)|=|B_1(|B_2(|\cdots (|B_{k}(t)|)\cdots|)|)|
$$
and $E^{1/2^k}(t)$ have the same one-dimensional distributions.
\end{lemma}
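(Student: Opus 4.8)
The plan is to argue by induction on $k$, reducing everything to the single identity for $k=1$, namely that $|B_1(t)|$ and $E^{1/2}(t)$ have the same law for each fixed $t$, and then propagating this through the self-similar structure of $I_k$ together with Lemma \ref{compose-et}.

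For the base case $k=1$ I would show directly that $|B_1(t)|$ and $E^{1/2}(t)$ have the same density. On one side, since the paper's Brownian motion has variance $2t$, the variable $B_1(t)$ is $N(0,2t)$, so $|B_1(t)|$ has density $(\pi t)^{-1/2}e^{-x^2/(4t)}$ on $x>0$. On the other side, formula \eqref{Etdens} with $\beta=1/2$ gives the density of $E^{1/2}(t)$ as $f_t(x)=2t\,x^{-3}g_{1/2}(t x^{-2})$, and substituting the explicit stable density $g_{1/2}(u)=(2\sqrt{\pi})^{-1}u^{-3/2}e^{-1/(4u)}$ (whose Laplace transform is $e^{-s^{1/2}}$) collapses this to exactly $(\pi t)^{-1/2}e^{-x^2/(4t)}$. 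Alternatively, and more conceptually, one can use the first-passage/reflection route: the index-$1/2$ stable subordinator $D$ with Laplace exponent $s^{1/2}$ is precisely the inverse first-passage process of the variance-$2t$ Brownian motion, so that $\P(E^{1/2}(t)\leq x)=\P(D(x)\geq t)=\P(\max_{s\leq t}B_1(s)<x)=\P(|B_1(t)|\leq x)$ by the reflection principle. Either way, the role of the $2t$ normalization is to make the two sides agree exactly, with no spurious constant; verifying this matching is the substance of the base case.

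For the inductive step I would write $|I_k(t)|=|B_1(W)|$, where $W:=|B_2(|B_3(\cdots|B_k(t)|\cdots)|)|$ has the same law as $|I_{k-1}(t)|$ and is independent of $B_1$. Conditioning on $W=s$ and applying the base case to the (independent) outer Brownian motion gives $\P(|B_1(s)|\leq x)=\P(E^{1/2}(s)\leq x)$, so that $|I_k(t)|$ has the law of $E^{1/2}$ evaluated at the independent random argument $W$. By the induction hypothesis $W\stackrel{d}{=}E^{1/2^{k-1}}(t)$, and since this evaluation depends only on the law of $W$ (by independence), I conclude $|I_k(t)|\stackrel{d}{=}E^{1/2}(E^{1/2^{k-1}}(t))$ with the two inverse-stable processes independent. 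Finally, Lemma \ref{compose-et} with $\beta_1=1/2$ and $\beta_2=1/2^{k-1}$ identifies this composition as inverse to a stable subordinator of index $\tfrac12\cdot\tfrac1{2^{k-1}}=\tfrac1{2^k}$, i.e. $E^{1/2}(E^{1/2^{k-1}}(t))\stackrel{d}{=}E^{1/2^k}(t)$, which closes the induction.

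I expect the only real obstacle to be the base case, specifically pinning down that the normalizations line up so that $|B_1(t)|\stackrel{d}{=}E^{1/2}(t)$ holds as an exact equality of laws (not merely up to scaling); once that identity is in hand, the inductive step is a routine conditioning argument followed by an application of Lemma \ref{compose-et}. A secondary point worth stating carefully is the independence bookkeeping in the conditioning step, which hinges on $B_1$ being independent of $B_2,\dots,B_k$.
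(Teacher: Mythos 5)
Your proof is correct and follows essentially the same route as the paper: the base case $|B_1(t)|\stackrel{d}{=}E^{1/2}(t)$ followed by induction on $k$ via Lemma \ref{compose-et} and a conditioning argument. The only difference is that the paper simply cites the proof of Theorem 3.1 in \cite{bmn-07} for the base case, whereas you verify it directly (both the density computation with the variance-$2t$ normalization and the first-passage/reflection argument are correct), which fills in a detail the paper outsources.
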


\begin{proof}
 Since $E^{1/2}$ and $|B(t)|$ have the same $1$-dimensional distributions, see, for example,  proof of Theorem 3.1 in \cite{bmn-07}, the proof  follows from Lemma \ref{compose-et} by induction on $k$. Hence we have by composing $k$ independent $E^{1/2}$s
 $$
 E_1^{1/2}( E_2^{1/2}(\cdots(E_k^{1/2}(t))))=E^{1/2^k}(t)\stackrel{(d)}{=}|I_k(t)|.
 $$
\end{proof}

\begin{corollary}
$$
I_{k+1}(t)=B_1(|B_2(|B_3(|\cdots (B_{k+1}(t))\cdots|)|)|)\stackrel{(d)}{=}B_1(E^{1/2^k}_t)
$$

\end{corollary}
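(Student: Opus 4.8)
The plan is to peel off the outermost Brownian motion and reduce everything to Lemma \ref{1-d-equivalence} via a routine conditioning argument. First I would write
\[
I_{k+1}(t)=B_1\bigl(|B_2(|B_3(|\cdots (|B_{k+1}(t)|)\cdots|)|)|\bigr)=B_1(|W(t)|),
\]
where $W(t)=B_2(|B_3(|\cdots (|B_{k+1}(t)|)\cdots|)|)$ is itself a $k$-iterated Brownian motion, built from the independent one-dimensional Brownian motions $B_2,\dots,B_{k+1}$, and $B_1$ is independent of $W$. Since relabelling the independent driving Brownian motions does not change the law, $W$ has the same law as $I_k$; in particular $|W(t)|\stackrel{(d)}{=}|I_k(t)|$ for each fixed $t$.

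Next I would invoke Lemma \ref{1-d-equivalence}, which gives $|I_k(t)|\stackrel{(d)}{=}E^{1/2^k}(t)$ for fixed $t\geq 0$. Combined with the previous step this yields $|W(t)|\stackrel{(d)}{=}E^{1/2^k}(t)$; let $\mu$ denote the common law of these two nonnegative random variables.

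It then remains to push this equality through the outer Brownian motion $B_1$. Since $B_1$ is independent of both $|W(t)|$ and $E^{1/2^k}(t)$, conditioning on the inner time gives, for every bounded measurable $g$,
\[
\E\bigl[g(B_1(|W(t)|))\bigr]=\int_0^\infty \E\bigl[g(B_1(a))\bigr]\,\mu(da)=\E\bigl[g(B_1(E^{1/2^k}(t)))\bigr],
\]
where $a\mapsto\E[g(B_1(a))]$ is the \emph{same} function in both integrals (under the variance-$2t$ convention it is simply the integral of $g$ against the Gaussian law with variance $2a$). Hence $I_{k+1}(t)=B_1(|W(t)|)\stackrel{(d)}{=}B_1(E^{1/2^k}(t))$, which is the claim.

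The only point requiring genuine care is the independence bookkeeping in the last display: one must check that the inverse stable subordinator defining $E^{1/2^k}$ is independent of the outer Brownian motion $B_1$, so that the inner random time may be replaced by any random variable with law $\mu$ without altering the law of the composition. This is built into the construction of $E^{1/2^k}$ in Lemma \ref{1-d-equivalence} (a composition of inverse stable subordinators independent of $B_1$), so no real obstacle arises, and the corollary is essentially a one-line consequence of Lemma \ref{1-d-equivalence} together with this standard subordination identity.
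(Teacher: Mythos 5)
Your argument is correct and is exactly the paper's intended one: the corollary appears there without proof precisely because it is, as you show, an immediate consequence of Lemma \ref{1-d-equivalence} applied to the inner $k$-fold iteration $W(t)=B_2(|B_3(|\cdots(|B_{k+1}(t)|)\cdots|)|)$, followed by the standard conditioning step that pushes equality of one-dimensional laws through the independent outer Brownian motion $B_1$. Your care about the independence bookkeeping (that $E^{1/2^k}$ is taken independent of $B_1$) is the right point to flag, and no gap remains.
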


\begin{proof}[Proof of Theorem \ref{pde-conn-n}]
The proof is an adaptation of  the proof of Theorem 3.1 in Baeumer et. al. \cite{bmn-07}. Using Lemma \ref{1-d-equivalence} we get that $$u(t,x)=\E_x(f(X(|I_{k}(t)|)))=\E(f(X(E^{1/2^k}_t)))$$ is a solution to both the Higher order Cauchy problem \eqref{frac-derivative-m} and the  fractional Cauchy problem \eqref{one-third-m} for $m=2^k$. By a simple conditioning argument we also get that $u(t,x)=\E_x(f(Z(J_{k}(t))))$.
\end{proof}

\section{Cauchy problems in Bounded domains}


 The  inverse stable subordinators with $\beta=1/2^k$ are  related to Brownian subordinators by Lemma \ref{1-d-equivalence}, this is well-known for the case $k=1$, see, for example,  \cite{bmn-07}.  Since Brownian subordinators are related to higher-order Cauchy problems by Theorem \ref{pde-conn-n}, this relationship can also be used to connect those higher-order Cauchy problems in bounded domains to their time-fractional analogues.  In this section, we establish those connections for Cauchy problems on bounded domains in $\rd$.  We extend this to establish an equivalence between a killed Markov process subordinated to an inverse stable subordinator with $\beta=1/2^k$, and the same process subject to a Brownian subordinator in section 6.  Finally, we identify the boundary conditions that make the two formulations identical.  This solves an open problem in \cite{bmn-07}. This problem was solved in \cite[Theorem 4.1]{MNV} for $k=1$.


 \begin{lemma}\label{greens-2}
  Let $D$ be a bounded domain with $\partial D\in C^{1,\alpha},  0< \alpha <1.$ Let $\{\phi_n, \lambda_n, n\geq 1\}$ be the set of eigenvalues $\lambda_n$ and corresponding eigenfunctions $\phi_n$ of the
Laplacian $\Delta$.
 Let $f\in C^j(\bar D)$ for $j=1,\cdots 2k $ and all the partial derivatives of $f$ of order up to $2k-1$ vanish on the boundary (A simpler condition is to assume $f\in C^{2k}_c(D)$). Then
 \begin{equation}
 \int_D \phi_n(x)\Delta^jf(x)dx=(-\lambda_n)^j\int_D \phi_n(x)f(x)dx=(-\lambda_n)^j\bar f (n), \ j=1,\cdots k
 \end{equation}

 \end{lemma}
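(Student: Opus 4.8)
The plan is to prove the identity by induction on $j$, using Green's second identity (stated in the preliminaries) to transfer the powers of the Laplacian off $f$ and onto the eigenfunction $\phi_n$, each transfer producing one factor of $-\lambda_n$ via the eigenvalue equation $\Delta\phi_n=-\lambda_n\phi_n$. The heart of the argument is the single-step reduction
\[\int_D \phi_n(x)\,\Delta^j f(x)\,dx \;=\; -\lambda_n \int_D \phi_n(x)\,\Delta^{j-1}f(x)\,dx, \qquad 1\le j\le k,\]
from which the claim follows at once by iterating down to $j=0$, where $\int_D \phi_n f\,dx=\bar f(n)$ by definition; indeed this gives $\int_D \phi_n \Delta^j f\,dx=(-\lambda_n)^j\bar f(n)$.

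To establish the single step I would apply Green's second identity with $u=\Delta^{j-1}f$ and $v=\phi_n$:
\[\int_D\big[\Delta^{j-1}f\,\Delta\phi_n - \phi_n\,\Delta^{j}f\big]\,dx = \int_{\partial D}\Big[\Delta^{j-1}f\,\frac{\partial\phi_n}{\partial\theta} - \phi_n\,\frac{\partial(\Delta^{j-1}f)}{\partial\theta}\Big]\,ds.\]
Substituting $\Delta\phi_n=-\lambda_n\phi_n$ turns the left-hand side into $-\lambda_n\int_D\phi_n\Delta^{j-1}f\,dx-\int_D\phi_n\Delta^j f\,dx$, so the desired reduction is exactly the statement that the boundary integral vanishes. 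The second boundary term drops out because $\phi_n=0$ on $\partial D$ (the Dirichlet condition). The first drops out because $\Delta^{j-1}f$ is a fixed linear combination of partial derivatives of $f$ of order $2(j-1)\le 2k-2\le 2k-1$, and by hypothesis all partial derivatives of $f$ of order up to $2k-1$ vanish on $\partial D$; hence $\Delta^{j-1}f\equiv 0$ on $\partial D$. Thus both boundary terms vanish for every $1\le j\le k$ (in particular the base case $j=1$ uses $\Delta^0 f=f=0$ on $\partial D$).

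It then remains only to check that Green's second identity legitimately applies, i.e. that both $u=\Delta^{j-1}f$ and $v=\phi_n$ lie in $C^2(D)\cap C^1(\bar D)$. Since $f\in C^{2k}(\bar D)$, for $1\le j\le k$ we have $\Delta^{j-1}f\in C^{2k-2(j-1)}(\bar D)\subseteq C^2(\bar D)$, which is more than enough; and by Remark \ref{eigen-regularity}, under $\partial D\in C^{1,\alpha}$ each eigenfunction belongs to $C^{1,\alpha}(\bar D)\cap C^\infty(D)$, so $\phi_n\in C^2(D)\cap C^1(\bar D)$. I do not expect a genuine analytic obstacle here: once the boundary integral is shown to vanish, the induction is routine. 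The one point requiring care is the bookkeeping, namely matching the smoothness hypothesis ($f\in C^j(\bar D)$ for $j\le 2k$) and the vanishing of derivatives up to order $2k-1$ against precisely what the application of Green's identity and the vanishing of $\Delta^{j-1}f$ on $\partial D$ demand for each $j\le k$.
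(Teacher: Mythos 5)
Your proof is correct and follows essentially the same route as the paper: Green's second identity applied to $u=\Delta^{j-1}f$ and $v=\phi_n$, with the boundary terms killed by the Dirichlet condition $\phi_n|_{\partial D}=0$ and the hypothesis that the derivatives of $f$ of order up to $2k-1$ (hence $\Delta^{j-1}f$) vanish on $\partial D$, followed by induction on $j$ using $\Delta\phi_n=-\lambda_n\phi_n$. Your explicit check that $\Delta^{j-1}f\in C^2(\bar D)$ and $\phi_n\in C^{1,\alpha}(\bar D)\cap C^\infty(D)$ justifies the use of Green's identity exactly as the paper does via Remark \ref{eigen-regularity}.
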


 \begin{proof}
 We use Green's second identity and induction in  $j$
 $$
\int_D [\phi_n\Delta^j f- \Delta ^{j-1}f\Delta \phi_n]dx=\int_{\partial
D}\left[\phi_n\frac{\partial
\Delta ^{j-1}f}{\partial \theta}-\Delta^{j-1}f\frac{\partial \phi_n}{\partial \theta}\right]ds,
$$
 where we use the fact that $\Delta^j f|_{\partial D}=0=\phi_n|_{\partial D}$, $f\in
C^j(\bar{D})$ for $j=1,\cdots , 2k$, and $\phi_n \in C^{1,\gamma}(\bar{D})$ by Remark \ref{eigen-regularity}. Hence,    by induction, the $\phi_n$-transform of $\Delta^j u$ is $(-\lambda_n)^j\bar f (n)$,
as $\phi_n$ is the eigenfunction of the Laplacian corresponding to
 eigenvalue $\lambda_n$.
 \end{proof}
 \begin{proof}[Proof of Theorem \ref{ibm-pde-thm}]

 Suppose $u$ is a solution to  Equation (\ref{ibm-pde-bounded-02}). Taking the $\phi_n$- transform of (\ref{ibm-pde-bounded-02}) and using Lemma \ref{greens-2}, we obtain

\begin{equation}\label{sine-trans-1}
\frac{\partial}{\partial t}\bar{u}(t,n) =\sum_{j=1}^{m-1} \frac{t^{j/m-1}}{\Gamma(j/m)} (-\lambda_n)^j \bar{f}(n)
+ (-\lambda_n)^m \bar{u}(t,n).
\end{equation}

Note that the time derivative commutes with the $\phi_n$-transform, as
$$
\left|\frac{\partial}{\partial t}u(t,x)\right|\leq  g(x)t^{ 1/m-1}, g\in L^\infty(D), \ t>0.
$$

Taking Laplace transforms on both sides and  using the well-known Laplace transform formula
\begin{equation}\label{power-laplace}
\int_0^\infty \frac{t^{-\beta}}{\Gamma(1-\beta)}e^{-st}dt=s^{\beta-1}
\end{equation}
for $\beta<1,$ gives us
\begin{equation}\label{laplace-sine-trans-1}
s\hat{u}(s,n)-\bar u(0,n) =\sum_{j=1}^{m-1} s^{-j/m} (-\lambda_n)^j \bar{f}(n)
+ (-\lambda_n)^m \hat{u}(s,n).
\end{equation}
Since $u$ is uniformly continuous on $C(
[0,\epsilon]\times \bar D)$, it is also uniformly bounded on $
[0,\epsilon]\times \bar D$. So, we have $\lim_{t\to 0}\int_D u(t,x)\phi_n(x)dx=\bar
f(n)$.  Hence, $\bar{u}(0,n)=\bar f(n)$.
\noindent
By collecting the like terms, we obtain
\begin{equation}\label{direct-ibm}
\hat{u}(s,n)=\frac{\bar{f}(n)\left( 1+\sum_{j=1}^{m-1} s^{-j/m} (-\lambda_n)^j \right)}{s-(-\lambda_n)^m}.
\end{equation}
Using the simple equality $(a-b)(a^{m-1} +a^{m-2}b+\cdot +ab^{m-1}+b^{m-1})=a^m-b^m$ for any $m=2,3,\cdots$,
for fixed $n$ and  for large $s,$ we get
\begin{eqnarray}
\hat{u}(s,n)&=&\frac{s^{1/m-1}\bar{f}(n)\left( s^{1-1/m}+\sum_{j=1}^{m-1} s^{-j/m+1-1/m} (-\lambda_n)^j \right)}{s-(-\lambda_n)^m}\label{LFT}\\
&=&\frac{s^{1/m-1}\bar{f}(n)\left( s^{1-1/m}+\sum_{j=1}^{m-1} s^{-j/m+1-1/m} (-\lambda_n)^j \right)}{(s^{1/m}-(-\lambda_n))\left( s^{1-1/m}+\sum_{j=1}^{m-1} s^{-j/m+1-1/m} (-\lambda_n)^j \right)} \nonumber\\
&=& \frac{s^{1/m-1}\bar{f}(n)}{s^{1/m}+\lambda_n}.\label{frac-phi-laplace}
\end{eqnarray}



It follows from the proof of Theorem 3.1 and Corollary
3.2 in \cite{MNV} that the inverse $\phi_n$-Laplace transform of \eqref{frac-phi-laplace} is given by
\begin{eqnarray}
u(t,x)&=&\sum_{n=1}^{\infty}\bar{f}(n) E_{1/m}(-\lambda_n t^{1/m}) \phi_{n}(x)\label{ibm-series}\\
&=&\E_x[f(X(E^{1/m}(t))) I(\tau_D(X)>E^{1/m}(t))]\nonumber\\
&=&\E_x[f(X(E^{1/m}(t))) I(\tau_D(X(E^{1/m}))>t)],\nonumber
\end{eqnarray}
where $E^{1/m}(t)$ is the process inverse to  stable subordinator of index $1/m$.

 For any fixed $n\geq 1$, the two
formulae \eqref{direct-ibm} and \eqref{frac-phi-laplace} are well-defined and
equal for all sufficiently large $s$. Since the inverse Laplace transform of $\hat u(s,n)$ in \eqref{frac-phi-laplace} is
\begin{equation}\label{mmm3}
\bar{u}(t,n)=\bar{f}(n)E_{1/m}(-\lambda_nt^{1/m}),
\end{equation}
 see, for example, \cite{MG-ML}, we can see easily
  that $\bar{u}(t,n)$ is continuous in $t>0$ for any $n\geq 1$.
Hence, the uniqueness  theorem for Laplace transforms \cite[Theorem
1.7.3]{ABHN} shows that, for each $n\geq 1$,  $\bar{u}(t,n)$  is the
unique continuous function whose Laplace transform is given by
 \eqref{LFT}.  Since $x\mapsto u(t,x)$ is an
element of $L^2(D)$ for every $t>0$, and  two elements of
$L^2(D)$ with the same $\phi_n$-transform are equal $dx$-almost
everywhere, we have  \eqref{ibm-series} is the unique element of $L^2(D)$
 and  \eqref{mmm3} is its $\phi_n$-transform.

Now, we show that the solution $u$ defined by \eqref{ibm-series} satisfies all the properties in \eqref{ibm-pde-bounded-02}.
From the Proof of Theorem 3.1 in \cite{MNV}, we can  get that the
solution $u(t, x)$  defined by the series \eqref{ibm-series} belongs to $L^2(D)$, converges absolutely and uniformly for $t\geq t_0>0$ for some $t_0$.

Next we  show that $\Delta ^l u
\in C(D_\infty)$  for $l=1,\cdots , m$.  To do this, we need only to show the absolute and uniform convergence of the
series defining $\Delta^l u$ for $l=1,\cdots , m$.  To apply $\Delta^l$ term-by-term to
\eqref{ibm-series}, we have to show that the series
$$
\sum_{n=1}^{\infty}\bar{f}(n)\phi_{n}(x)(-\lambda_n)^lE_{1/2}(-\lambda_n
t^{1/m})
$$
is  absolutely and uniformly convergent for $t>t_0
>0.$

Note that by Lemma \ref{greens-2} the $\phi_n$-transform of $\Delta^j f$ is given by
$$
\int_D\phi_n(x)\Delta^j f(x)dx =(-\lambda_n)^j\bar f(n)
$$
and  using
\cite[equation (13)]{krageloh},
\begin{equation}
\label{efie}
0\leq E_\beta(-\lambda_n t^\beta)\leq
c/(1+\lambda_n t^\beta),
\end{equation}
 we get
\begin{eqnarray}
\sum_{n=1}^{\infty}|\bar{f}(n)||\phi_{n}(x)|(\lambda_n)^lE_{1/m}(-\lambda_n
t^{1/m})&\leq &
\sum_{n=1}^{\infty}|\bar{f}(n)||\phi_{n}(x)|\lambda_n^l\frac{c}{1+\lambda_n
t^\frac{1}{m}}\nonumber\\
&\leq & c
t_0^{-\frac{1}{m}}\sum_{n=1}^{\infty}|\bar{f}(n)||\phi_{n}(x)|\lambda_n^{l-1}<\infty,
\end{eqnarray}
where the last inequality follows from the absolute and uniform
convergence of the eigenfunction expansion of $\Delta^{m-1} f$.

A  similar argument  using
\cite[Equation (17)]{krageloh}
\begin{equation}\label{time-derivative}
\left| \frac{ d E_\beta(-\lambda_n
t^\beta)}{dt}\right|\leq c\frac{\lambda_n t^{\beta-1}}{1+\lambda_n t^\beta}\leq c\lambda_nt^{\beta -1},
\end{equation}
and the fact that  the eigenfunction expansion of $\Delta^{m-1} f$ converges absolutely and uniformly allows us to differentiate the series \eqref{ibm-series} term by term with respect to $t$.

We next show that   $\Delta^l u \in C^1(\bar D)$ for $l=1,\cdots , m-1$: this follows from the bounds
in \cite[Theorem 8.33]{gilbarg-trudinger}
 and the absolute and uniform convergence of the series defining $\Delta^{m-1}f$.

 \begin{eqnarray}
 |\phi_n|_{1,\alpha; D}&\leq & C(1+\lambda_n)\sup_{D}|\phi_n(x)|,\label{uniform-eigenvalue-bounds}
 \end{eqnarray}
where $C=C(d,\partial D)$ is a finite constant. Here $$|u|_{k,\alpha; D}=\sup_{|\gamma|=k}[D^\gamma u]_{\alpha ,D}+ \sum_{j=0}^{k} \sup_{|\gamma|=j}\sup_{D}|D^\gamma u|,\ \ k=0,1,2,\cdots$$ and
$$[D^\gamma u]_{\alpha ,D}=\sup_{x,y\in D, x\neq y}\frac{|D^\gamma u(x)- D^\gamma u(y)|}{|x-y|^\alpha}$$
are norms on $C^{k,\alpha}(\bar D)$.
Hence for $l=0, 1, 2,\cdots, m-1$
\begin{eqnarray}
|\Delta^lu(.,t)|_{1,\alpha; D}&\leq & C \sum_{n=1}^\infty |\bar f(n)|E_\beta(-\lambda_n
t^\beta)(\lambda_n)^l(1+\lambda_n)\sup_{D}|\phi_n(x)| \nonumber\\
&\leq &C \sum_{n=1}^\infty |\bar f(n)(\lambda_n)^l|\frac{1+\lambda_n}{1+\lambda_n t^\beta}\sup_{D}|\phi_n(x)| \nonumber\\
&\leq & Ct^{-\beta} \sum_{n=1}^{\infty}\sup_{D}|\phi_n(x)| (\lambda_n)^l|\bar{f}(n)|\nonumber\\
&&+C\sum_{n=1}^{\infty}\sup_{D}|\phi_n(x)|(\lambda_n)^l|\bar{f}(n)| <\infty .\nonumber
 \end{eqnarray}

 With  this we established that $u\in \mathcal{H}_{\Delta^m}\cap C_b(D_\infty)\cap C^1(\bar D)$.


Observe next that the Laplace transform of
$$
\frac{\partial}{\partial t}E_\beta(-\lambda_n t^{1/m})-
\sum_{j=1}^{m-1} \frac{t^{j/m-1}}{\Gamma(j/m)} (-\lambda_n)^j
-(-\lambda_n)^m E_\beta(-\lambda_n t^{1/m})
$$
 is
$$
\frac{s^{1/m}}{s^{1/m}+\lambda_n}-1+\sum_{j=1}^{m-1} s^{-j/m} (-\lambda_n)^j-\frac{(-\lambda_n)^ms^{1/m-1}}{s^{1/m}+\lambda_n}=0,
$$
since the Laplace transform of $E_{1/m}(-\lambda_n
t^{1/m})$ is $\frac{s^{1/m-1}}{s^{1/m}+\lambda_n}$
 and using \eqref{power-laplace}. Hence, by the uniqueness of Laplace transforms, we get
$$
\frac{\partial}{\partial t}E_\beta(-\lambda_n t^{1/m})-
\sum_{j=1}^{m-1} \frac{t^{j/m-1}}{\Gamma(j/m)} (-\lambda_n)^j
-(-\lambda_n)^m E_\beta(-\lambda_n t^{1/m})=0.
$$

\noindent
  Now applying the
  time derivative and  $\Delta^m$ to the series in (\ref{ibm-series}) term by term gives
\begin{eqnarray}
&&\frac{\partial}{\partial t}u(t,x) -\sum_{j=1}^{m-1} \frac{t^{j/m-1}}{\Gamma(j/m)} \Delta^j f(x)
- \Delta^m u(t,x)\nonumber\\
&= &\sum_{n=1}^\infty \bar
f(n)\left[\phi_n(x)\frac{\partial}{\partial
t}E_{\frac{1}{m}}(-\lambda_n t^{\frac{1}{m}})- \sum_{j=1}^{m-1} \frac{t^{j/m-1}}{\Gamma(j/m)} \Delta^j \phi_n(x)-E_{\frac{1}{m}}(-\lambda_n
t^{\frac{1}{m}})\Delta^m\phi_n(x)\right]\nonumber\\
&=& \sum_{n=1}^\infty \bar
f(n)\phi_n(x) \left[\frac{\partial}{\partial
t}E_{\frac{1}{m}}(-\lambda_n t^{\frac{1}{m}})+ - \sum_{j=1}^{m-1} \frac{t^{j/m-1}}{\Gamma(j/m)} (-\lambda_n)^j-(-\lambda_n)^mE_{\frac{1}{m}}(-\lambda_n
t^{\frac{1}{m}})\right]\nonumber\\
&=& 0\nonumber
\end{eqnarray}
which shows that the PDE in (\ref{ibm-pde-bounded-02}) is satisfied.
  Thus, we  conclude that $u$ defined by
(\ref{ibm-series}) is a classical solution to
(\ref{ibm-pde-bounded-02}). This completes the first part of the proof.

We next deal with the case $m=2^k$. Observe that $E^{1/2^k}(t)$, process inverse to stable subordinator of index $1/2^k$ and $|I_k(t)|$ have the  same density, as they have the same one dimensional distribution by Lemma \ref{1-d-equivalence}. Let $p(t,l)$ denote the common density of $|I_k(t)|$ and $E^{1/2^k}(t)$.
\begin{eqnarray}
u(t,x)&=&\sum_{n=1}^{\infty}\bar{f}(n) E_{1/2^k}(-\lambda_n t^{1/2^k}) \phi_{n}(x)\label{ibm-series1}\\
&=&\int_{0}^{\infty}\left[\sum_{n=1}^{\infty}\bar{f}(n) e^{-\lambda_n l} \phi_{n}(x)\right]
 2^ktg_{1/2^k}({tl^{-2^k}})l^{-(1+2^k)}dl\nonumber\\
&=&\int_{0}^{\infty}\left[\sum_{n=1}^{\infty}\bar{f}(n) e^{-\lambda_n l} \phi_{n}(x)\right] p(t,l)dl\label{e-y-equivalence}\\
&=&\int_{0}^{\infty}T_D(l)f(x) p(t,l)dl\nonumber\\
&=& \E_x[f(X(|I_k(t)|)) I(\tau_D(X)>|I_k(t)|)].\label{condition-Y}
\end{eqnarray}
Note that Equation \eqref{condition-Y} follows
by a conditioning argument.

Finally we prove uniqueness. Let $u_1, u_2$
be two solutions of \eqref{ibm-pde-bounded-02}
with initial data $u(0,x)=f(x)$ and Dirichlet boundary condition $u(t,x)=0$ for $x\in\partial D$. Then $U=u_1-u_2$
is a solution of \eqref{ibm-pde-bounded-02} with zero
initial data and zero boundary value.  Taking
$\phi_n$-transform on both sides of
\eqref{ibm-pde-bounded-02} we get
$$
\frac{\partial}{\partial t}\bar{U}(t,n) =
 (-\lambda_n)^m
\bar{U}(t,n), \ \ \bar{U}(0,n)=0,
$$
and then $\bar U(t,n)=0$ for all $t>0$ and all $n\geq
1$. This implies that $U(t,x)=0$ in the sense of $L^2$ functions,
since $\{\phi_n: \ n\geq 1\}$ forms a complete
orthonormal basis for $L^2(D)$. Hence, $U(t,x)=0$ for all $t>0$
and almost all $x\in D$. Since $U$ is a continuous function on
$D$, we have $U(t,x)=0$ for all $(t,x)\in [0,\infty)\times D,$
thereby proving uniqueness.
 \end{proof}

 \begin{corollary}
Let $f\in C^{2k}_c(D)$ be a  $2k$-times continuously differentiable
function of compact support in D. If $k>m-1+3d/4$, then the Equation
(\ref{ibm-pde-bounded-02}) has a strong solution. In particular, if
$f\in C^{\infty}_c(D)$, then the classical solution of Equation
(\ref{ibm-pde-bounded-02}) is in $C^\infty(D).$

\end{corollary}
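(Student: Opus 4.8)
The plan is to deduce the Corollary from Theorem \ref{ibm-pde-thm} by checking, for $f\in C^{2k}_c(D)$ with $k>m-1+3d/4$, that the single nontrivial hypothesis of that theorem holds: namely that the eigenfunction expansion of $\Delta^{m-1}f$ converges absolutely and uniformly. The quantitative inputs I would use are a sup-norm bound on the Dirichlet eigenfunctions together with the Weyl asymptotics of the eigenvalues; combining them reproduces exactly the threshold $3d/4$.

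First I would record the decay of the coefficients $\bar f(n)$. Since $f$ has compact support in $D$, all its partial derivatives vanish near $\partial D$, so Lemma \ref{greens-2} applies (with this $k$) and gives $\overline{\Delta^j f}(n)=(-\lambda_n)^j\bar f(n)$ for $j=1,\dots,k$. Taking $j=k$ and using Cauchy--Schwarz with $\|\phi_n\|_{L^2(D)}=1$ yields the key bound $|\bar f(n)|=\lambda_n^{-k}\,|\overline{\Delta^k f}(n)|\le \lambda_n^{-k}\,\|\Delta^k f\|_{L^2(D)}$. Next I would bound $\|\phi_n\|_\infty$ through the Dirichlet heat kernel $p_D(t,x,x)=\sum_m e^{-\lambda_m t}\phi_m(x)^2$ of Section 2: dropping all but the $n$-th term and using the on-diagonal domination $p_D(t,x,x)\le Ct^{-d/2}$ by the whole-space kernel gives $\phi_n(x)^2 e^{-\lambda_n t}\le Ct^{-d/2}$, and optimizing at $t=\lambda_n^{-1}$ produces $\|\phi_n\|_\infty\le C\lambda_n^{d/4}$.

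With these two estimates and the Weyl law $\lambda_n\asymp c_d\,n^{2/d}$ in hand, I would estimate the expansion of $\Delta^{m-1}f$ (valid since $m-1<k$) by the Weierstrass $M$-test:
\[
\sum_n |\overline{\Delta^{m-1}f}(n)|\,\|\phi_n\|_\infty
=\sum_n \lambda_n^{m-1}|\bar f(n)|\,\|\phi_n\|_\infty
\le C\|\Delta^k f\|_{L^2}\sum_n \lambda_n^{\,m-1-k+d/4}.
\]
Because $\lambda_n^{\,m-1-k+d/4}\asymp n^{(2/d)(m-1-k)+1/2}$, the right-hand series converges precisely when $(2/d)(m-1-k)+1/2<-1$, i.e. when $k>m-1+3d/4$, which is the hypothesis. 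Hence the expansion of $\Delta^{m-1}f$ converges absolutely and uniformly, and since $f\in C^{2k}_c(D)\subset C^{2m-2}(D)\cap C^{2m-3}(\bar D)\cap D(\Delta)$ with all relevant derivatives vanishing on $\partial D$, every remaining hypothesis of Theorem \ref{ibm-pde-thm} is satisfied. That theorem then identifies the series \eqref{ibm-series} as the solution, and the uniform convergence just established (which lets one apply $\partial_t$ and $\Delta^l$ term by term) makes it a genuine strong (classical) solution, verifying the PDE and the boundary conditions $\Delta^l u|_{\partial D}=0$ pointwise.

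For the final assertion I would use that $f\in C^\infty_c(D)$ lies in $C^{2k}_c(D)$ for \emph{every} $k$, so the coefficient bound $|\bar f(n)|\le \lambda_n^{-k}\|\Delta^k f\|_{L^2}$ holds for all $k$; thus $\bar f(n)$ decays faster than any power of $\lambda_n$. Combining this with $\|\phi_n\|_\infty\le C\lambda_n^{d/4}$ (and, where boundary regularity of derivatives is needed, the $C^{1,\alpha}$-bound \eqref{uniform-eigenvalue-bounds}) and with the Mittag-Leffler decay \eqref{efie} shows that for each fixed $t$ and each $l$ the series $\sum_n \bar f(n)\lambda_n^{l}\phi_n(x)E_{1/m}(-\lambda_n t^{1/m})$ converges absolutely and uniformly; hence $\Delta^l u(t,\cdot)$ is continuous for all $l$, giving $u(t,\cdot)\in C^\infty(D)$. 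I expect the one delicate step to be the eigenfunction estimate $\|\phi_n\|_\infty\le C\lambda_n^{d/4}$, since it is precisely the exponent $d/4$ here (rather than something larger) that makes the summability threshold come out to $3d/4$; the remainder is a routine application of Weyl's law and Theorem \ref{ibm-pde-thm}.
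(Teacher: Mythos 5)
Your proposal is correct and follows essentially the same route as the paper's proof: coefficient decay $|\bar f(n)|\le c_k\lambda_n^{-k}$ via Lemma \ref{greens-2} and Cauchy--Schwarz, the sup-norm bound $\|\phi_n\|_\infty\le C\lambda_n^{d/4}$ (which the paper simply cites from Davies, Example 2.1.8, and which you re-derive by the standard on-diagonal heat kernel/ultracontractivity argument), and Weyl's law $\lambda_n \asymp n^{2/d}$, leading to exactly the same summability threshold $k>m-1+3d/4$. Your verification of the remaining hypotheses of Theorem \ref{ibm-pde-thm} and your handling of the $C^\infty_c(D)$ case are somewhat more explicit than the paper's, but the underlying argument is identical.
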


\begin{proof} By Example 2.1.8 of \cite{davies},  $|\phi_n(x)|\leq (\lambda_n)^{d/4}$.
Also,  from Corollary 6.2.2 of \cite{davies-d}, we have $\lambda_n\sim  n^{2/d}$.

By  Lemma \ref{greens-2} we get

\begin{equation}\label{phi-trans-laplacian}
\overline{\Delta^{k} f}(n)=(-\lambda_n)^{k} \bar{f}(n).
\end{equation}
Using  Cauchy-Schwartz inequality and the fact $f\in C^{2k}_c(D),$ we get
   $$\overline{\Delta^{k} f}(n)\leq \left[\int_D (\Delta^{k} f(x))^2dx\right]^{1/2}\left[\int_D (\phi_n(x))^2dx\right]^{1/2}=\left[\int_D (f^{(2k)}(x))^2dx\right]^{1/2}= c_k,$$
 where $c_k$ is  a constant independent of $n$.

This and Equation \eqref{phi-trans-laplacian} give $|\bar{f}(n)|\leq c_k(\lambda_n)^{-k}$.

Since
$$
\Delta^{m-1} f(x)=\sum_{n=1}^\infty (-\lambda_n)^{m-1}\bar f (n) \phi_n(x),
$$
to get the
absolute and uniform convergence of the series defining $\Delta^{m-1} f$, we
consider
   \begin{eqnarray*}
   \sum_{n=1}^{\infty}(\lambda_n)^{m-1}|\phi_n(x)|| \bar{f}(n)|&\leq& \sum_{n=1}^{\infty} (\lambda_n)^{d/4+m-1}c_k(\lambda_n)^{-(k)}\\
   &\leq& c_n\sum_{n=1}^{\infty} (n^{2/d})^{d/4+m-1-k}=c_k\sum_{n=1}^{\infty} n^{1/2+2(m-1)/d-2k/d}
   \end{eqnarray*}
 which is finite if $-2(m-1)/d-1/2+2k/d>1$, i.e. $k>m-1+3d/4$.
 \end{proof}
 \section{Other higher order Cauchy Problems}


 The $d$-dimensional symmetric $\alpha$-stable process $X({t})$ with
$\alpha\in (0,2]$ is the process with stationary independent
increments whose transition density
$$
p_{t}^{\alpha}(x,y)=p^{\alpha}(t,x-y), \ \ \ \ \ (t,x,y)\in
(0,\infty)\times \RR{R}^{d}\times \RR{R}^{d},
$$
is characterized by
$$
\int_{\RR{R}^{d}}
e^{iy.\xi}p^{\alpha}(t,y)dy=\exp(-t|\xi|^{\alpha}), \ \ \ \ \ t>0,
\xi \in \RR{R}^{d}.
$$
The process has right continuous paths, it is rotation and
translation invariant. For $\alpha =2$, this is Brownian motion running twice  the speed of standard Brownian motion.

Since the Laplace transform method does not apply by the appearance of $t^{-1}$ in the PDE \eqref{PDE-CONNECT2-bounded}, we use a direct method to prove Theorem \ref{cauchy-bounded}.

\begin{proof}[Proof of Theorem \ref{cauchy-bounded}]
By a simple conditioning argument and using the series representation of the killed semigroup $T_D(t)$ in \eqref{heat-kernel}, we can express $u(t,x)$ as
\begin{eqnarray}
u(t,x)&=&2\int_0^\infty \bigg(\sum_{n=1}^\infty e^{-\lambda_ns}\bar f(n)\phi_n(x)\bigg)p^1(t,s)ds\nonumber\\
&=&2\int_0^\infty \bigg(\sum_{n=1}^\infty e^{-\lambda_ns}\bar f(n)\phi_n(x)\bigg)\frac{t}{\pi(t^2+s^2)}ds\label{cauchy-series}.
\end{eqnarray}

We use Fubini-Tonelli theorem,  the simple inequality
\begin{equation}\label{cauchy-bound}
\int_0^\infty  e^{-\lambda_ns}\frac{t}{\pi(t^2+s^2)}ds\leq \frac{1}{\pi t\lambda_n},
\end{equation}
and the fact that the series defining $\Delta f$ is absolutely and uniformly convergent to show that
 $$u(t,x)=2 \sum_{n=1}^\infty \bar f(n)\phi_n(x)\int_0^\infty e^{-\lambda_ns}\frac{t}{\pi(t^2+s^2)}ds.$$
Using this, and \eqref{cauchy-bound}
we can show that we can apply $\Delta^2$ to the series \eqref{cauchy-series} term by term. Hence it follows that  $u, \Delta u, \Delta^2 u\in C(D_{\infty})$.

We next show that each term in the series \eqref{cauchy-series} satisfy the PDE \eqref{cauchy-bounded}.
We use the fact that $p^{1}(t,s)$ satisfy
$$
(\frac{\partial^{2}}{\partial s^{2}}+\frac{\partial^{2}}{\partial
t^{2}})p^{1}(t,s)=0,
$$
dominated convergence theorem, and integration by parts twice to get
\begin{eqnarray}
\frac{\partial^{2}}{\partial
t^{2}}\bigg(\bar f(n)\phi_n(x) \int_0^\infty e^{-\lambda_ns}p^{1}(t,s)ds\bigg)&=&\bar f(n)\phi_n(x) \int_0^\infty e^{-\lambda_ns}\frac{\partial^{2}}{\partial
t^{2}}p^{1}(t,s)ds\nonumber\\
 &=&- \bar f(n)\phi_n(x) \int_0^\infty e^{-\lambda_ns}\frac{\partial^{2}}{\partial
s^{2}}p^{1}(t,s)ds \nonumber\\
& =& \frac{1}{\pi t}\lambda_n\phi_n(x)\bar f (n)\nonumber\\
&+&\lambda_n^2\phi_n(x)\bar f (n)\int_0^\infty e^{-\lambda_ns}p^{1}(t,s)ds \nonumber\\
&=&-\frac{1}{\pi t}\Delta \phi_n(x)\bar f (n)\nonumber\\
&+&-\Delta^2\phi_n(x)\bar f (n)\int_0^\infty e^{-\lambda_ns}p^{1}(t,s)ds. \nonumber
\end{eqnarray}
From this we get that the time derivative can be applied term by term to the series \eqref{cauchy-series} since the series for $\Delta f$ and $\Delta^2 u$ converge absolutely and uniformly.
Hence applying the time derivative and the $\Delta ^2$ term by term to the series \eqref{cauchy-series} we obtain
\begin{eqnarray}
&&\frac{\partial ^{2}}{\partial t^{2}}u(t,x)\ +\frac{2\Delta f(x)}{\pi t}\ + \ \Delta^{2}u(t,x)\nonumber\\
&=& \sum_{n=1}^\infty \bar f(n)\bigg(\phi_n(x) \int_0^\infty e^{-\lambda_ns}\frac{\partial^{2}}{\partial
t^{2}}p^{1}(t,s)ds + \frac{1}{\pi t}\Delta \phi_n(x)\nonumber\\
&&\ \ \ \ \ \ \ \ \ \ \ \ \ +\Delta^2\phi_n(x)\int_0^\infty e^{-\lambda_ns}p^{1}(t,s)ds \bigg)=0.
\end{eqnarray}


\end{proof}


For a rational $\alpha \neq 1$  the PDE is more complicated since  kernels
of symmetric $\alpha$-stable processes satisfy a higher order PDE.

\begin{theorem}[Nane \cite{nane-h}]\label{alphapde}
Let $\alpha \in (0,2)$ be rational $\alpha=l/m$, where $l$ and $m$
are relatively prime. Let $T(s)f(x)=\E_x[f(X(s))]$ be the
semigroup of Brownian motion $X(t) $ and let
$\Delta$ be its generator. Let $f$ be a bounded measurable
function in the domain of $\Delta$, with $D^{\gamma}f$  bounded and
H\"{o}lder continuous for all multi index $\gamma$ such that $|\gamma |=2l$. Then
$u(t,x)=\E_x[f(X(|Y(t)|)]$
is a solution of
\begin{eqnarray}
(-1)^{l+1}\frac{\partial ^{2m}}{\partial t^{2m}}u(t,x) & =  &
-2\sum_{i=1}^{l} \left(\frac{\partial^{2l-2i}}{\partial
s^{2l-2i}}p^{\alpha}(t,s)\big|_{s=0}\right) \Delta^{2i-1}f(x)\nonumber\\
 & &\ -\ \Delta^{2l} u(t,x), \ \ \ \ t>0,\ \   x\in \rd \nonumber\\
u(0,x) &  =  & \ f(x), \ \ \ \ \ x \in \RR{R}^{d}.\nonumber
\end{eqnarray}
\end{theorem}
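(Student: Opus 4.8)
The plan is to mirror the direct, conditioning-based argument used for Theorem \ref{cauchy-bounded}, now carried out on all of $\rd$ and for a general rational index $\alpha=l/m$. First I would condition on the value of the one-dimensional symmetric $\alpha$-stable process $Y$. Since $Y(t)$ has a symmetric density $p^\alpha(t,\cdot)$, the time change $|Y(t)|$ has density $2p^\alpha(t,s)$ on $(0,\infty)$, so that
\begin{equation*}
u(t,x) = \E_x[f(X(|Y(t)|))] = 2\int_0^\infty T(s)f(x)\,p^\alpha(t,s)\,ds.
\end{equation*}
The initial condition $u(0,x)=f(x)$ then follows by letting $t\to 0$, since $|Y(t)|\to 0$ and $X$ has continuous paths.

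The key auxiliary fact is the higher-order PDE satisfied by the stable kernel in its time variable. Taking the Fourier transform in $s$ gives $\widehat{p^\alpha}(t,\xi)=e^{-t|\xi|^\alpha}$, whence $\partial_t^{2m}\widehat{p^\alpha}(t,\xi)=|\xi|^{2m\alpha}\widehat{p^\alpha}(t,\xi)$. Because $\alpha=l/m$ forces $2m\alpha=2l$, and because the Fourier symbol of $\partial_s^{2l}$ is $(-1)^l|\xi|^{2l}$, inverting the transform yields the identity
\begin{equation*}
\frac{\partial^{2m}}{\partial t^{2m}}p^\alpha(t,s) = (-1)^l\frac{\partial^{2l}}{\partial s^{2l}}p^\alpha(t,s).
\end{equation*}
(For $l=m=1$ this is exactly the relation $\partial_t^2 p^1=-\partial_s^2 p^1$ used in Theorem \ref{cauchy-bounded}.) I would also record that $p^\alpha(t,\cdot)$ is even, so that $\partial_s^k p^\alpha(t,0)=0$ for every odd $k$; this is what makes the boundary terms collapse below.

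Next I would differentiate under the integral sign and substitute this identity to get $\partial_t^{2m}u(t,x)=2(-1)^l\int_0^\infty T(s)f(x)\,\partial_s^{2l}p^\alpha(t,s)\,ds$, then integrate by parts $2l$ times in $s$, transferring all derivatives onto $T(s)f(x)$. Using the heat equation for the Brownian semigroup, $\partial_s^j T(s)f = \Delta^j T(s)f$, together with $T(0)=\mathrm{Id}$, the only boundary terms at $s=0$ that survive are those pairing an odd-order derivative $g^{(2i-1)}(0)=\Delta^{2i-1}f(x)$ with a (nonvanishing) even-order kernel derivative $\partial_s^{2l-2i}p^\alpha(t,0)$, for $i=1,\dots,l$, while the remaining volume integral reassembles $\tfrac12\Delta^{2l}u(t,x)$. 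Collecting these contributions and multiplying through by $(-1)^{l+1}$ produces precisely the stated equation.

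The hard part will be the analytic justification of these formal steps rather than the bookkeeping of signs: differentiating $2m$ times under the $t$-integral, applying $\Delta^{2l}$ inside the integral, and above all verifying that the boundary terms at $s=\infty$ vanish throughout the $2l$ integrations by parts. This is where the hypotheses enter — $f\in D(\Delta)$ with $D^\gamma f$ bounded and H\"older continuous for all $|\gamma|=2l$ guarantees that each $\Delta^j T(s)f$ is bounded and that the spatial operators commute with the time change, while the rapid decay of $p^\alpha(t,s)$ and all of its $s$-derivatives as $s\to\infty$ annihilates the boundary contributions at infinity and legitimizes the dominated-convergence arguments underlying the interchanges.
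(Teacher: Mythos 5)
Your proposal is correct and takes essentially the same route the paper takes: the paper itself only cites \cite{nane-h} for this theorem, but its proofs of the bounded-domain analogues (Theorems \ref{cauchy-bounded} and \ref{alphapde-bounded}) use exactly your strategy --- conditioning on $|Y(t)|$ to write $u(t,x)=2\int_0^\infty T(s)f(x)\,p^{\alpha}(t,s)\,ds$, the kernel identity $\partial_t^{2m}p^{\alpha}=(-1)^l\partial_s^{2l}p^{\alpha}$ (stated there as Lemma 3.2 of \cite{nane-h}, which you rederive correctly via Fourier transform using $2m\alpha=2l$), and repeated integration by parts in $s$, with evenness of $p^{\alpha}(t,\cdot)$ annihilating the boundary terms of odd order. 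Your sign bookkeeping is right and reproduces the stated equation, and the analytic caveats you flag (interchange of derivatives and integrals, decay at $s=\infty$) are the same ones handled by dominated convergence in the paper's bounded-domain proofs.
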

Let
\begin{eqnarray*}
\mathcal{K}_{\Delta ^{2l}} &\equiv &\bigg\{u:D_\infty\to \rr:\ \ \frac{\partial^2}{\partial t^2}u, \Delta^j u, \in C(D_\infty) \   \mathrm{for}\  j=1,
 \cdots, 2l,\\
 && \ \ \ \ \Delta^j u, \in C(\bar D)\   \mathrm{for}\ j=1,\cdots, 2l-1\bigg\}
\end{eqnarray*}

This theorem takes the following form in bounded domains
\begin{theorem}\label{alphapde-bounded}
Let $\alpha \in (0,2)$ be rational $\alpha=l/m$, where $l$ and $m$
are relatively prime.
Let $D$ be a bounded domain with $\partial D\in C^{1,\gamma},  0< \gamma <1.$
Let
 $\{X(t)\}$ be a Brownian motion, and let $\{Y(t)\}$ be symmetric $\alpha$-stable process independent of $\{X(t)\}$. Let $f\in D(\Delta)\cap C^{4l-2}(D)$ for which the eigenfunction expansion of $\Delta^{2l-1} f$  with respect to the complete orthonormal basis $\{\phi_n:\ n\geq 1\}$ converges absolutely and uniformly.
Then
$u(t,x)=\E_x[f(X(|Y(t)|)I(\tau_D(X)>|Y(t)|)]$
is a classical solution of
\begin{eqnarray}
u&\in &\mathcal{K}_{\Delta ^{2l}}\cap C_b(D_\infty)\cap C^1(\bar D)\nonumber\\
(-1)^{l+1}\frac{\partial ^{2m}}{\partial t^{2m}}u(t,x)\ & =  &
-2\sum_{i=1}^{l} \left(\frac{\partial^{2l-2i}}{\partial
s^{2l-2i}}p^{\alpha}(t,s)\big|_{s=0}\right) \Delta^{2i-1}f(x)\
  \label{alpha-higher-pde}\\
 & &\ -\ \Delta^{2l} u(t,x), \ \ \ \ t>0,\ \   x\in D \nonumber\\
u(0,x) &  =  & \ f(x), \ \ \ \ \ x \in D.\nonumber\\
u(t,x)& = & \Delta^ju(t,x) = 0,\ \ x\in \partial D, \ t\geq 0,\ j=1,\cdots, 2l-1.\nonumber
\end{eqnarray}
\end{theorem}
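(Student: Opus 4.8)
The plan is to follow the structure of the proof of Theorem \ref{cauchy-bounded}, replacing the explicit Cauchy kernel by the general symmetric $\alpha$-stable density $p^\alpha(t,s)$ and replacing the relation $(\partial_t^2+\partial_s^2)p^1=0$ by the higher-order equation that $p^\alpha$ satisfies when $\alpha=l/m$. First I would use the independence of $X$ and $Y$, the symmetry of $Y$ (so that $|Y(t)|$ has density $2p^\alpha(t,s)$ on $s>0$), and the eigenfunction expansion \eqref{heat-kernel} of the killed semigroup $T_D(s)$ to write
$$u(t,x)=2\int_0^\infty\Big(\sum_{n=1}^\infty e^{-\lambda_n s}\bar f(n)\phi_n(x)\Big)p^\alpha(t,s)\,ds.$$
Setting $I_n(t)=\int_0^\infty e^{-\lambda_n s}p^\alpha(t,s)\,ds$ and noting the $\alpha$-stable analogue of \eqref{cauchy-bound}, namely $I_n(t)\le \|p^\alpha(t,\cdot)\|_\infty/\lambda_n=C_\alpha t^{-1/\alpha}/\lambda_n$, the Fubini--Tonelli theorem lets me interchange the sum and the integral, giving $u(t,x)=2\sum_n\bar f(n)\phi_n(x)I_n(t)$.

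Next I would establish the claimed regularity $u\in\mathcal{K}_{\Delta^{2l}}\cap C_b(D_\infty)\cap C^1(\bar D)$. Applying $\Delta^j$ term by term uses $\Delta^j\phi_n=(-\lambda_n)^j\phi_n$, and the bound $I_n(t)\le C_\alpha t^{-1/\alpha}/\lambda_n$ converts the factor $\lambda_n^{2l}$ (for $j=2l$) into $\lambda_n^{2l-1}$, so the series for $\Delta^j u$, $j=1,\dots,2l$, converges absolutely and uniformly on $\{t\ge t_0\}\times D$ by the assumed convergence of the eigenfunction expansion of $\Delta^{2l-1}f$; hence $\Delta^j u\in C(D_\infty)$. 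The inclusions $\Delta^j u\in C(\bar D)$ for $j\le 2l-1$ and $u\in C^1(\bar D)$ follow as in Theorem \ref{ibm-pde-thm} from the eigenfunction boundary estimate \eqref{uniform-eigenvalue-bounds}. For the time derivatives I would differentiate $I_n$ under the integral sign, bounding $|\partial_t^k p^\alpha(t,s)|$ via the scaling $p^\alpha(t,s)=t^{-1/\alpha}p^\alpha(1,st^{-1/\alpha})$; the key point is that $|\partial_t^{2m}I_n(t)|\le C(t)\lambda_n^{2l-1}$, so the series for $\partial_t^{2m}u$ also converges like the $\Delta^{2l-1}f$ series.

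The heart of the proof, and the main obstacle, is verifying the PDE term by term. From $\widehat{p^\alpha}(t,\xi)=e^{-t|\xi|^{l/m}}$ one reads off the one-dimensional kernel identity underlying Theorem \ref{alphapde}, namely $(-1)^{l+1}\partial_t^{2m}p^\alpha(t,s)=-\partial_s^{2l}p^\alpha(t,s)$. I would then compute $(-1)^{l+1}\partial_t^{2m}I_n(t)=-\int_0^\infty e^{-\lambda_n s}\partial_s^{2l}p^\alpha(t,s)\,ds$ and integrate by parts $2l$ times in $s$. The boundary terms at $s=\infty$ vanish by the decay of $p^\alpha$ and its derivatives, while at $s=0$ only the even-order derivatives survive, since $p^\alpha(t,\cdot)$ is even and hence its odd $s$-derivatives vanish at the origin. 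This produces exactly
$$(-1)^{l+1}\partial_t^{2m}I_n(t)=\sum_{i=1}^l\lambda_n^{2i-1}\,\partial_s^{2l-2i}p^\alpha(t,s)\big|_{s=0}-\lambda_n^{2l}I_n(t),$$
which, after multiplying by $2\bar f(n)\phi_n(x)$ and using $\Delta^{2i-1}\phi_n=-\lambda_n^{2i-1}\phi_n$ and $\Delta^{2l}\phi_n=\lambda_n^{2l}\phi_n$, is precisely the $n$-th term of \eqref{alpha-higher-pde}. The delicate points are the justification of differentiation under the integral and the decay estimates on the general stable density required to kill the boundary terms at infinity; these replace the elementary manipulations available for the explicit Cauchy kernel in Theorem \ref{cauchy-bounded}. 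Finally, since $2I_n(t)\to 1$ as $t\to0^+$ (the mass of $|Y(t)|$ concentrates at $0$ while $\int_0^\infty p^\alpha(t,s)\,ds=\tfrac12$), the series gives $u(0,x)=\sum_n\bar f(n)\phi_n(x)=f(x)$; and because every $\phi_n$ vanishes on $\partial D$, the uniformly convergent series for $\Delta^j u$, $j=0,\dots,2l-1$, give $u(t,x)=\Delta^j u(t,x)=0$ on $\partial D$. Summing the term-by-term identity over $n$, justified by the convergence established above, yields that $u$ solves \eqref{alpha-higher-pde}, completing the proof.
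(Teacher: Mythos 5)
Your proposal is correct and follows essentially the same route as the paper: the conditioning/series representation of $u$, the scaling bound $\int_0^\infty e^{-\lambda_n s}p^\alpha(t,s)\,ds \le C t^{-1/\alpha}/\lambda_n$ for the regularity of $\Delta^j u$, and the kernel identity $\partial_s^{2l}p^\alpha+(-1)^{l+1}\partial_t^{2m}p^\alpha=0$ used to verify the PDE term by term. The only difference is one of detail: the paper cites Lemma 3.2 of \cite{nane-h} (via Theorem \ref{alphapde}) for the fact that each term of the series satisfies the PDE, whereas you carry out the $2l$-fold integration by parts explicitly, with the evenness of $p^\alpha(t,\cdot)$ killing the odd boundary terms at $s=0$ --- which is precisely the computation underlying the cited result.
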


\begin{proof}
Let $T_D(t)$ be the killed semigroup of Brownian motion in $D$.
We use the representation
\begin{eqnarray}
u(t,x)&=&\E_x[f(X(|Y(t)|)I(\tau_D(X)>|Y(t)|)]=2\int_{0}^{\infty}p^{\alpha}(t,s)
T_D({s})f(x)ds\nonumber\\
&=&2\int_0^\infty \bigg(\sum_{n=1}^\infty e^{-\lambda_ns}\bar f(n)\phi_n(x)\bigg)p^\alpha(t,s)ds\label{alpha-series}
\end{eqnarray}
and the fact that the transition density $p^{\alpha}(t,s)$ of the process $Y(t)$
satisfies
$$
(\frac{\partial^{2}}{\partial
s^{2}})^{l}+(-1)^{l+1}\frac{\partial^{2m}}{\partial
t^{2m}})p^{\alpha}(t,s)=0,\ \ \ \ \ (t,x)\in (0,\infty)\times
\rr,
$$
 from Lemma 3.2 in \cite{nane-h}.

 We also use the  well-known fact  that for $\alpha\in(0,2)$
$$
p^\alpha(t,x)=t^{-d/\alpha}p(1,t^{-1/\alpha}x)\leq t^{-d/\alpha}p(1,0)=t^{-d/\alpha}M_{d,\alpha}, t>0, x\in \rd,
$$
where
$$
M_{d,\alpha}=\frac{1}{(2\pi)^d}\int_\rd e^{-|x|^\alpha}dx.
$$
Hence
$$
\int_0^\infty e^{-\lambda_n s}p^{\alpha}(t,s)ds\leq \frac{M_{d,\alpha}}{\lambda_nt^{d/\alpha}}.
$$
This allows us to deduce the fact that $\Delta^j u\in C(D_\infty) $ for $j=0, 1,\cdots, 2l$ and also  that $\Delta^j u\in C(\bar D) $ for $j=0, 1,\cdots, 2l-1$.

To get that $\frac{\partial ^{2m}}{\partial t^{2m}}u(t,x)\in C(D_\infty)$ we use the fact  that the series defining $\Delta^{2l}u$ converges absolutely and uniformly as well as the series defining $\Delta ^{2l-1}f$ converges absolutely and uniformly and the fact that the terms in the series defining $u(t,x)$ in \eqref{alpha-series} satisfy the PDE \eqref{alpha-higher-pde}.

Hence we can interchange the sum and powers of the Laplacian in  the series \eqref{alpha-series} term by term to show that PDE in \eqref{alpha-higher-pde} is satisfied.
\end{proof}

\newpage
 \section{Extensions and Discussion}

 A uniformly elliptic operator of divergence form is defined on
$C^2$ functions by
\begin{equation}\label{unif-elliptic-op}
Lu=\sum_{i,j=1}^{d}\frac{\partial \left(a_{ij}(x)(\partial
u/\partial x_i)\right)}{\partial x_j}
\end{equation}
with $a_{ij}(x)=a_{ji}(x)$ and, for some $\lambda>0,$
\begin{equation}\label{elliptic-bounds}
\lambda \sum_{i=1}^ny_i^2\leq \sum_{i,j=1}^na_{ij}(x)y_iy_j\leq
\lambda^{-1} \sum_{i=1}^ny_i^2,\ \ \forall y \in \rd.
\end{equation}

The operator $L$ acts on the Hilbert space $L^2(D)$. We define the initial domain $C_0^\infty(\bar D)$ of the operator as follows.  We say that $f$ is in $C_0^\infty(\bar D),$ if  $f\in C^\infty(\bar D)$ and $f(x)=0$ for all $x\in \partial D$. This condition incorporates the notion of  Dirichlet boundary conditions.

From \cite[Corollary 6.1]{davies-d}, we have that the associated
quadratic form
$$
Q(f,g)=\int_D\sum_{i,j=1}^d a_{ij}\frac{\partial f}{\partial
x_i}\frac{\partial g}{\partial x_j}dx
$$
 is closable on the domain $C_0^\infty(\bar D)$ and the domain of the closure is independent of the particular coefficients $(a_{ij})$ chosen. In particular, for $f,g\in C_0^\infty(\bar D)$ by integration by parts
 $$
 \int_D g(x)Lf(x)dx=Q(f,g)=\int_D f(x)Lg(x)dx,
 $$
which shows that $L$ is symmetric.

From now on, we will use the symbol $L_D$ if we particularly want to
emphasize the choice of Dirichlet boundary conditions, to refer to
the self-adjoint operator associated with the closure of the
quadratic form above by the use of \cite[Theorem 4.4.5]{davies-d}.
Thus, $L_D$ is the Friedrichs extension of the operator defined
initially on $C_0^\infty(\bar D)$.

If the coefficients $a_{ij}(x)$ are smooth ($a_{ij}(x) \in C^1(D)$), then  $L_Du$ takes the form
$$
L_Du=\sum_{i,j=1}^{d}a_{ij}(x)\frac{ \partial^2
u}{\partial x_i\partial  x_j}+\sum_{i=1}^d\left(\sum_{j=1}^{d}\frac{\partial a_{ij}(x)}{\partial x_j}\right)\frac{\partial
u}{\partial x_i}=\sum_{i,j=1}^{d}a_{ij}(x)\frac{ \partial^2
u}{\partial x_i\partial  x_j}+\sum_{i=1}^d b_i(x)\frac{\partial
u}{\partial x_i}.
$$

If $X_t$ is a solution to
$$
dX_t=\sigma (X_t)dW_t+b(X_t)dt, \ \ X_0=x_0,
$$
where $\sigma$ is a $d\times d$ matrix, and $W_t$ is a Brownian
motion, then $X_t$ is associated with the operator $L_D$ with
$a=\sigma \sigma ^T$, (see Chapters 1 and 5 of
Bass \cite{bass}). Define the first exit time as
 $\tau_D(X)=\inf \{ t\geq 0:\ X_t\notin D\}$. The semigroup defined by
$T(t)f(x)=E_x[f(X_t)I(\tau_D(X))>t)]$ has generator $L_D$, which
follows by an application of the It$\mathrm{\hat{ o}}$ formula.

Let $D $ be a  bounded domain in $\RR{R}^d$. Suppose $L$ is a uniformly elliptic operator of divergence form
with Dirichlet
  boundary conditions on $D$, and that  there exists a constant $\Lambda$ such that for all $x\in D$,
\begin{equation}\label{L-uniform-bound}
\sum_{i,j=1}^d |a_{ij}(x)|\leq \Lambda.
\end{equation}
Let
  $T_D(t)$ be the corresponding semigroup. Then
 $T_D(t)$ is an ultracontractive semigroup (even  an intrinsically
 ultracontractive semigroup), see Corollary 3.2.8, Theorem 2.1.4, Theorem 4.2.4, and Note 4.6.10 in \cite{davies}.
  Every ultracontractive semigroup has a kernel for the killed semigroup on a
   bounded domain which can be represented as a series expansion of the eigenvalues
   and the eigenfunctions of $L_D$  (cf. \cite[Theorems 2.1.4 and 2.3.6]{davies} and
    \cite[Theorems 8.37 and 8.38]{gilbarg-trudinger} ): There exist eigenvalues
$0< \mu_1<\mu_2\leq \mu_3\cdots,$ such that $\mu_n\to\infty,$ as
$n\to\infty$, with the  corresponding complete orthonormal set (in $H^2_0$) of
eigenfunctions $\psi_n$ of the operator $L_D$ satisfying

\begin{equation}\label{eigen-eigen}
 L_D \psi_n(x)=-\mu_n \psi_n(x), \ x\in D:\  \\
\psi_n |_{\partial D}=0.
\end{equation}

 In this case,
$$
p_D(t,x,y)=\sum_{n=1}^{\infty}e^{-\mu_n t}\psi_n(x)\psi_n(y)
$$
is the heat kernel of the killed semigroup $T_D$. The series
converges absolutely and  uniformly on $[t_0,\infty)\times D\times D$ for all
$t_0>0$.

 Suppose $D$  satisfies a uniform exterior cone condition. Let $\{X_t\}$ be a Markov process
 in $\rd$ with generator $L_D$, and $f$
be continuous on $\bar D$. Then the semigroup
\begin{equation}\begin{split}\label{TDdef}
T_D(t)f(x)&=E_x[f(X_t)I( t<\tau_D(X))]\\
&=\int_D p_D(t,x,y)f(y)dy\\
&=\sum_{n=1}^{\infty}e^{-\mu_n t}\psi_n(x)\bar{f}(n)
\end{split}\end{equation}
 solves the Dirichlet initial-boundary value problem in $D$:
\begin{eqnarray}
\frac{\partial u(t,x)}{\partial t}&=& L_D u(t,x),\ \ x\in D, \ t>0, \nonumber\\
u(t,x)&=&0,\ \ x\in \partial D,\nonumber\\
u(0,x)&=&f(x), \ \ x\in D.\nonumber
\end{eqnarray}





\begin{remark}\label{diffusion-eigenfunctions}
The eigenfunctions belong to $L^\infty(D)\cap C^\alpha(D)$ for some $\alpha>0$,
 by \cite[Theorems 8.15 and 8.24]{gilbarg-trudinger}. If $D$ satisfies a
 {\em uniform exterior cone condition} all the eigenfunctions belong to $C^\alpha(\bar D)$
  by \cite[Theorem 8.29]{gilbarg-trudinger}. If $a_{ij}\in C^\alpha(\bar D)$ and  $\partial D \in C^{1,\alpha}$, then all the eigenfunctions belong to $C^{1,\alpha}(\bar D)$ by \cite[Corollary 8.36]{gilbarg-trudinger}.
  If $a_{ij} \in C^{\infty}(D)$ then each eigenfunction of $L$ is in $C^\infty(D)$ by \cite[Corollary 8.11]{gilbarg-trudinger}.
If $a_{ij} \in C^{\infty}(\bar{D})$ and $\partial D\in C^\infty,$
then each eigenfunction of $L$ is in $C^\infty(\bar{D})$ by \cite[Theorem 8.13]{gilbarg-trudinger}.
\end{remark}

\begin{remark}Theorems \ref{pde-conn-n}, \ref{ibm-pde-thm} and \ref{cauchy-bounded} are valid if we replace the outer
 Brownian motion with a Diffusion process.
 This can be verified by considering \eqref{uniform-eigenvalue-bounds}, \eqref{eigen-eigen}, \eqref{TDdef}, Remark \ref{diffusion-eigenfunctions},
  and Theorem 3.1 in \cite{MNV}.
\end{remark}

\begin{remark}
It might be an interesting project to consider the PDEs treated in this paper with the Neumann boundary conditions. Probably the solutions will be obtained by running reflected diffusions subordinated by $k$-iterated Brownian motions. We will treat this problem elsewhere.
\end{remark}

\textbf{Acknowledgments.} I would like to thank Mark Meerschaert for encouragement on working on the problems in this paper. I also would like to thank anonymous referee for his or her helpful comments on the comments on the historical priority on the equivalence of fractional Cauchy problems and higher order PDEs which improved the accuracy of the results leading to the present paper.

\end{document}